\newtheorem{thm}{Theorem}
\newtheorem*{thm1}{Theorem \ref{thm:countable}}
\newtheorem*{thmrat}{Theorem \ref{thm:ratio}}
\newtheorem{prop}[thm]{Proposition}
\newtheorem{cor}[thm]{Corollary}
\theoremstyle{definition}
\newtheorem{defi}{Definition}
\newtheorem{example}{Example}
\newcommand{\prob}{\mathbb{P}}
\def\ZZ{{\mathbb Z}}
\def\NN{{\mathbb N}}
\def\PP{{\mathbb P}}
\newcommand{\bracks}[1]{\left( #1 \right)}
\newcommand{\var}{\operatorname{var}}
\begin{document}

\title{Random-step Markov processes}

\author[N. Bushaw]{Neal Bushaw}
\address{School of Mathematical and Statistical Sciences,
Arizona State University,
P.O.\,Box 871804,
Tempe, AZ 85287-1804,
USA,
and
IMPA, 
Estrada Dona Castorina 110,
Jardim Bot\^anico,
Rio de Janeiro, RJ, 
Brasil}
\email{neal@asu.edu}

\author[K. Gunderson]{Karen Gunderson}
\address{Heilbronn Institute for Mathematical Research,
School of Mathematics,
University of Bristol, 
Bristol BS8 1TW,
United Kingdom}
\email{karen.gunderson@bristol.ac.uk}

\author[S. Kalikow]{Steven Kalikow} 
\address{
Department of Mathematics,
University of Memphis,
3725 Norriswood,
Memphis, TN 38152, 
USA}
\email{skalikow@memphis.edu}

\date{4 October, 2014}

\keywords{random Markov process, uniform martingale, stationary process, $g$-function}
\subjclass[2010]{37A05; 37A35; 60G10; 60G48}

\begin{abstract}
In this paper, we explore two notions of stationary processes.  The first is called a \emph{random-step Markov process} in which the stationary process of states, $\left(X_i\right)_{i \in \mathbb{Z}}$ has a stationary coupling with an independent process on the positive integers, $\left(L_i\right)_{i \in \mathbb{Z}}$ of `random look-back distances'.  That is,
 $L_0$ is independent of the `past states', $\left(X_i, L_i\right)_{i<0}$, and  for every positive integer $n$, the probability distribution on the `present', $X_0$, conditioned on the event $\left\{L_0 = n\right\}$  and on the past is the same as the probability distribution on $X_0$ conditioned on the `$n$-past', $\left(X_i\right)_{-n\leq i <0}$ and $\{L_0 = n\}$.  A random Markov process is a generalization of a Markov chain of order $n$ and has the property that the distribution on the present given the past can be uniformly approximated given the $n$-past, for $n$ sufficiently large.  Processes with the latter property are called uniform martingales, closely related to the notion of a 
`continuous $g$-function'.

In this paper, it is shown that every stationary process on a countable alphabet that is a uniform martingale and is dominated by a finite measure on that alphabet is also a random Markov process and that the random variables $\left(L_i\right)_{i \in \mathbb{Z}}$ and associated coupling can be chosen so that the distribution on the present given the $n$-past and the event $\left\{L_0 = n\right\}$ is `deterministic': all probabilities are in $\left\{0,1\right\}$.  In the case of finite alphabets, those random-step Markov processes for which $L_0$ can be chosen with finite expected value are characterized.  For stationary processes on an uncountable alphabet, a stronger condition is also considered which is sufficient to imply that a process is a random Markov processes.  In addition, a number of examples are given throughout to show the sharpness of the results.
\end{abstract}

\maketitle

\section{Introduction}
\label{sec:intro}

\subsection{Definitions and results}\label{subsec:def}

A random-step Markov process, introduced by Kalikow in~\cite{Kalikow90} under the name `random Markov process', is a natural generalization of the classical $n$-step Markov chain. In this type of stationary process, rather than the current state determining the probability distribution of the state in the next time step, as in a Markov chain, there is a random `look-back time' which determines how much of the state history is used to find the distribution of the random variable at the next time step. If the random look-back time is bounded above by some $n \in \NN$, then the random Markov process is trivially an $n$-step Markov chain.  In this paper, we present new results on characterizations those of stationary processes on both countable and uncountable alphabets that are random-step Markov processes as well as a number of examples that show the sharpness of these results.

The following notation is used throughout the paper:  For any set $A$ and $\Omega = A^{\mathbb{Z}}$, the set $A$ is called the \emph{alphabet} for $\Omega$ and $\Omega$ is represented as the set of doubly infinite words on $A$.  For a given word $\mathbf{\omega} = (\omega_i)_{i \in \mathbb{Z}} \in\Omega$, the infinite sequence $\bracks{\omega_i}_{i\in\ZZ_{-}}\in A^{\ZZ_{-}}$ is called the \emph{past}. Similarly, for any $m \geq 1$, the sequence  $\bracks{\omega_{i}}_{i=-m}^{-1}$ is called the \emph{$m$-past} of $\mathbf{\omega}$. To condense notation, we will write, for example, $\PP\bracks{X_0=\omega_0\mid\bracks{X_i}_{-m}^{-1}=\bracks{\omega_i}_{-m}^{-1}}$ as shorthand for
\[
\PP\bracks{X_0=\omega_0\mid X_{-1}=\omega_{-1},\ldots,X_{-m}=\omega_{-m}}.
\]

\begin{defi}[Random-step Markov Process]\label{def:randomMarkov}
A stationary process $\left(\left(X_i\right)_{i \in \mathbb{Z}}, \mathbb{P}\right)$ on an alphabet $A$, with measurable sets $\mathcal{A}$, is called a \emph{random-step Markov process} (or \emph{random Markov process}) if{f} there exists an independent stationary process on the positive integers, $\left(L_i\right)_{i \in \mathbb{Z}}$, and a stationary coupling $\hat{\mathbb{P}}$ of $\left(X_i\right)_{i \in \mathbb{Z}}$ and $\left(L_i\right)_{i \in \mathbb{Z}}$ such that $L_0$ is independent of $\left\{X_i:i <0\right\}$, and so that for every $n \in \mathbb{Z}^+$, $\mathbf{\omega} \in A^{\mathbb{Z}}$ and measurable set $E_0 \subseteq A$,
\begin{multline}\label{eq:rm-def}
\hat{\PP}\bracks{X_0 \in E_0 \mid \bracks{X_i}_{i=-n}^{-1} = (\omega_i)_{i=-n}^{-1} \wedge L_0=n}\\
	=\hat{\PP}\bracks{X_0 \in E_0\mid \bracks{X_i}_{i<0} = \bracks{\omega_i}_{i<0} \wedge L_0=n}.
\end{multline}
The stationary coupling $\left(\left(X_i, L_i\right)_{i \in \mathbb{Z}}, \hat{\mathbb{P}}\right)$ is called a \emph{complete random-step Markov process} (or \emph{complete random Markov process}).
\end{defi}

For every $i \in \mathbb{Z}$, the variable $L_i$ is called the \emph{look back time (or distance) for $X_i$}, as one need know only $L_0$ and then look at the $L_0$-past of $(X_i)_{i \in \mathbb{Z}}$ to determine the law of $X_0$ exactly.  

Note that in Definition \ref{def:randomMarkov}, since conditional probabilities are only defined up to sets of measure $0$, it does not matter whether the condition \eqref{eq:rm-def} holds for all $\mathbf{\omega}$ or only almost all.  In the case that the alphabet $A$ is either finite or countably infinite, it suffices to verify condition \eqref{eq:rm-def} in the special case that the set $E_0$ consists of a singleton.

Previously, the types of stationary processes given by Definition \ref{def:randomMarkov} have been simply called `random Markov processes'.  In this paper, we shall continue to use this terminology, but specify `random-step Markov process' in the definition to clarify the source of the additional randomness, compared to a usual $n$-step Markov process.

A similar notion to that of a random Markov process is a `uniform martingale' also introduced by Kalikow~\cite{Kalikow90}, for which the martingale convergence theorem applies in a uniform way. That is, the distribution on $X_0$ given the past can be approximated uniformly and arbitrarily well by a distribution which is dependent only on the $m$-past, for some $m$ large enough.  

For a measure $\mu$, let $||\mu||_{\text{TV}}$ denote the total variation norm so that for probability measures $\mu$ and $\nu$, the total variation distance is given by $||\mu - \nu||_{\text{TV}} = \sup_E \left\{|\mu\left(E\right) - \nu\left(E\right)|\right\}$.  Recall also that if $\mu$ and $\nu$ are both measures on a countable set $A$, then $||\mu - \nu||_{\text{TV}} = \frac{1}{2} \sum_{a \in A}|\mu(a) - \nu(a)|$.

\begin{defi}\label{def:um}
A stationary distribution $\left(\left(X_i\right)_{i \in \mathbb{Z}}, \mathbb{P}\right)$ on alphabet $A$ is called a \emph{uniform martingale} if{f} for every $\varepsilon > 0$ there exists $n_{\varepsilon}$ so that for every $n \geq n_\varepsilon$ and every $\left(\omega_i\right)_{i<0} \in \prod_{i<0} A$,
\begin{equation}\label{eq:um-def}
\left\|\mathbb{P}\left(\cdot \mid \left(X_i\right)_{-\infty}^{-1} = \left(\omega_i\right)_{-\infty}^{-1}\right) - \mathbb{P}\left(\cdot \mid \left(X_i\right)_{-n}^{-1} = \left(\omega_i\right)_{-n}^{-1}\right)\right\|_{\text{TV}} <  \varepsilon.
\end{equation}

In the case that $A$ is countable, for every $k \geq 0$, define the \emph{$k$-th variation of $\mathbb{P}$} to be
\begin{align}
\var_k &= \var_k(\mathbb{P}) \notag\\ 
		&= \sup\bigg\{|\mathbb{P}\left(X_0 = a_0 \mid (X_i)_{i=-k}^{-1} = (\omega_i)_{i = -k}^{-1} \right) \notag\\
		& \qquad - \mathbb{P}\left(X_0 = a_0 \mid (X_i)_{i=-\infty}^{-1} = (\omega_i)_{i = -\infty}^{-1} \right) | \ :\ (\omega_i)_{i \leq -1} \in A^{\mathbb{Z}^-}\bigg\} \label{def:nvar}
\end{align}

Note that if $A$ is finite, this is equivalent to the condition that for every $\varepsilon>0$, there exists $n_{\varepsilon}$ so that for every $n \geq n_{\varepsilon}$, $a \in A$, and $\left(\omega_i\right)_{i<0} \in \prod_{i<0} A$,
\begin{multline}\label{eq:UM_finite}
\bigg|\mathbb{P}\left(X_0 = a \mid \left(X_i\right)_{i=-\infty}^{-1} = \left(\omega_i\right)_{i=-\infty}^{-1}\right)\\ - \mathbb{P}\left(X_0 = a \mid \left(X_i\right)_{i=-n}^{-1} =\left(\omega_i\right)_{i=-n}^{-1}\right)\bigg|
< \varepsilon.
\end{multline}
\end{defi}

The definition of the $k$-th variation in Equation \eqref{def:nvar} is chosen to agree with the definition of $k$-th variations of $g$-functions in Equation \eqref{def:nvar-g-fn} to come.

If $\left(\left(X_i\right)_{i \in \mathbb{Z}}, \mathbb{P}\right)$ is a random Markov process with look-back distances $(L_i)_{i \in \mathbb{Z}}$, then for every $n \geq 1$, and $(\omega_i)_{i < 0}$,
\begin{equation}\label{eq:tv-lookback}
 \| \mathbb{P}\left( \cdot \mid (X_i)_{-\infty}^{-1} = (\omega_i)_{-\infty}^{-1}\right) - \mathbb{P}\left(\cdot \mid (X_i)_{-n}^{-1} = (\omega_i)_{-n}^{-1}\right) \|_{TV} \leq  \mathbb{P}(L_0 > n),
\end{equation}
which tends to $0$ as $n$ tends to infinity, uniformly in the choice of $(\omega_i)_{i < 0}$.  In the original paper by Kalikow~\cite{Kalikow90}, the weaker condition in Equation \eqref{eq:UM_finite} was used as the definition of a uniform martingale.  In the case of processes on infinite alphabets, the stronger condition is necessary, as was noted in~\cite{Kalikow12}.

The reason for the name `uniform martingale' is that for any stationary process $((X_i)_{i \in \mathbb{Z}}, \mathbb{P})$ and every $a$, the sequence $\left\{\mathbb{P}\left(X_0 = a \mid \left(X_i\right)_{i=-m}^{-1} = \left(\omega_i\right)_{i=-m}^{-1}\right)\right\}_{m \geq 1}$ is a martingale and converges, pointwise, to 
\begin{equation}\label{eq:martingale}
\mathbb{P}\left(X_0 = a \mid \left(X_i\right)_{i=-\infty}^{-1} = \left(\omega\right)_{i=-\infty}^{-1}\right).
\end{equation}
A stationary process is a uniform martingale if this convergence is uniform in $\left(\omega_i\right)_{i<0}$.

Kalikow~\cite[Theorem 1.7]{Kalikow12} showed that, for any Lebesgue probability space, every aperiodic measure-preserving transformation is isomorphic to a random Markov process on a countable alphabet.  While the statement of Kalikow's Theorem 1.7~\cite{Kalikow12} concludes only that such transformations are isomorphic to uniform martingales, the proof, in fact, constructs a random Markov process.

In~\cite{Kalikow90}, Kalikow showed that a uniform martingale on a binary alphabet is also a random Markov process.  The purpose of this paper is to both strengthen this result for finite alphabets and to examine some extensions of this result to processes on both countable and uncountable alphabets.

In Section \ref{sec:ctble}, it is shown that uniform martingales on a countable alphabet that satisfy an additional condition, discussed below, are random Markov processes.

\begin{defi}[Dominating Measure]
\label{def:dommeasure}
Let $\bracks{\left(X_i\right)_{i \in \mathbb{Z}}, \prob}$ be a stationary process on an alphabet $A$ and $\mu$ a measure on $A$.  Then, $\mu$ is called a \emph{dominating measure} for $\bracks{\left(X_i\right)_{i \in \mathbb{Z}}, \prob}$ if{f} for every event $E$ in the $\sigma$-algebra generated by $\left(X_{i}\right)_{i<0}$ and every measurable set $A_0 \subseteq A$, $\prob\bracks{X_0 \in A_0 \mid E} \leq \mu\bracks{A_0}$.

If the measure $\mu$ is finite ($\mu\left(A\right)< \infty$), then $\left(\left(X_i\right)_{i \in \mathbb{Z}}, \prob\right)$ is said to have a \emph{finite dominating measure}.
\end{defi}

It was shown by Parry~\cite{Parry66, Parry69} and Rohlin~\cite{Rohlin65}  that every measure-preserving transformation is isomorphic to a stationary process on a countable alphabet for which the past determines the present.  These results were strengthened by Kalikow~\cite{Kalikow12}.  In the case of random Markov processes, the notion the past determining the present can sometimes be expressed precisely in terms of the look-back distances.   In particular, we shall consider the a particular class of random Markov processes, for which the look-back distance $L_0$ and the previous states uniquely determine the state $X_0$.

\begin{defi}
A random Markov process $\left(\left(X_i\right)_{i \in \mathbb{Z}}, \mathbb{P}\right)$ on a countable alphabet $A$ is called a \emph{deterministic random-step Markov process} if{f} there exists a representation as a complete random-step Markov process $\left(\left(X_i, L_i\right)_{i \in \mathbb{Z}}, \mathbb{P}'\right)$ so that for every sequence $\left(\omega_i\right)_{i \leq 0}$,
\begin{multline}\label{eq:def-det-rm}
\mathbb{P}'\left(X_0 = \omega_0 \mid \left(X_i\right)_{i <0} = \left(\omega_i\right)_{i<0} \wedge L_0 = n\right)\\ 
	=\mathbb{P}\left(X_0 = \omega_0 \mid \left(X_i\right)_{-n}^{-1} = \left(\omega_i\right)_{-n}^{-1} \wedge L_0 = n\right)\in \left\{0,1\right\}.
\end{multline}
\end{defi}

In this paper, only deterministic random Markov processes on countable alphabets are considered.  In particular, since the condition in equation \eqref{eq:def-det-rm} is trivially satisfied for many stationary processes on uncountable alphabets, a generalization of  the notion of a deterministic random Markov process to an uncountable alphabet would require a careful choice of definition, which is not addressed here.

In the study of random Markov processes on a countable alphabet, for any integer $n$ and sequence $\left(\omega_i\right)_{-n\leq i <0} \in A^n$, the values 
\begin{equation}\label{eq:table-vals}
\left\{\mathbb{P}\left(X_0 = \omega_0 \mid \left(X_i\right)_{i=-n}^{-1} = \left(\omega_i\right)_{i=-n}^{-1} \wedge L_0 = n\right) \mid \omega_0 \in A\right\}
\end{equation}
 are sometimes called the \emph{table values} for the event \[\left\{\left(X_i\right)_{i=-n}^{-1} = \left(\omega_i\right)_{i=-n}^{-1} \wedge L_0 = n\right\}\] (see~\cite{Kalikow90}).  A deterministic random Markov is then one with a representation as a complete random Markov process for which all table values are either $0$ or $1$.

While the property of being a deterministic random Markov process might appear to be quite strong, the following theorem shows that, in fact, all uniform martingales on finite alphabets can be expressed in this form.

\begin{thm}\label{thm:countable}
Every uniform martingale, $\left(\left(X_i\right)_{i \in \mathbb{Z}}, \mathbb{P}\right)$, on a countable alphabet with a finite dominating measure, $\mu$, is a deterministic random Markov process.

In the case that the alphabet $A$ is finite, $\left(\left(X_i\right)_{i \in \mathbb{Z}}, \mathbb{P}\right)$ is a deterministic random Markov process with finite expected look-back distance if{f} the $n$-th variations are summable:
\[
\sum_{n \geq 1} \var_n (\mathbb{P}) < \infty.
\] 
\end{thm}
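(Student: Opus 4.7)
The plan is to construct the deterministic random-step Markov representation directly by a greedy inductive scheme, identifying at each stage a probability $q_n$ and a function $f_n:A^n \to A$ to represent ``when $L_0=n$, output $X_0 = f_n\bigl((X_i)_{-n}^{-1}\bigr)$.'' Write $p(a,\omega) = \PP(X_0 = a \mid (X_i)_{i<0} = \omega)$ and let $R_n(a,\omega) = p(a,\omega) - \sum_{k\le n} q_k\,\mathbf{1}\{f_k((\omega_i)_{-k}^{-1})=a\}$, starting from $R_0 = p$. At each step, for each $n$-past $\sigma \in A^n$ set $\beta_n(a,\sigma) = \inf\{R_{n-1}(a,\omega'): \omega'\text{ extends }\sigma\}$, let $f_n(\sigma)$ be a symbol achieving $\max_a \beta_n(a,\sigma)$ (the sup is attained because $\beta_n(a,\sigma)\le \mu(a)$ and $\mu$ is finite, so $\{a:\beta_n(a,\sigma)\ge\varepsilon\}$ is finite for every $\varepsilon>0$), and set $q_n = \inf_\sigma \max_a \beta_n(a,\sigma)$. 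By construction $R_n \ge 0$; since $\sum_a R_n(a,\omega) = 1 - \sum_{k\le n} q_k$, once I show $\sum_n q_n = 1$ the decomposition $p(a,\omega) = \sum_n q_n\,\mathbf{1}\{f_n((\omega_i)_{-n}^{-1}) = a\}$ will follow. The stationary coupling is then obtained by appending i.i.d.\ uniform$[0,1]$ variables $(U_i)$ independent of $(X_i)$ and defining each $L_i$ via the conditional law $\PP(L_0=n \mid X_0=a,\omega) = q_n\,\mathbf{1}\{f_n((\omega_i)_{-n}^{-1})=a\}/p(a,\omega)$; a direct calculation shows $L_0$ is independent of $(X_i)_{i<0}$ with marginal $q_n$ and that \eqref{eq:def-det-rm} is satisfied with values in $\{0,1\}$.

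The main obstacle is establishing $\sum_n q_n = 1$, and this is exactly where both the uniform martingale hypothesis and the finite dominating measure are essential. First, for $\omega',\omega_0$ both extending a common $n$-past $\sigma$, the indicators in $R_{n-1}$ agree (all indices $k$ satisfy $k\le n-1$), so
\[
\bigl|R_{n-1}(a,\omega') - R_{n-1}(a,\omega_0)\bigr| = \bigl|p(a,\omega') - p(a,\omega_0)\bigr| \le 2\var_n,
\]
whence $\beta_n(a,\sigma) \ge R_{n-1}(a,\omega_0) - 2\var_n$ for any extension $\omega_0$. Second, given $\varepsilon>0$, the finite dominating measure furnishes a finite $F_\varepsilon \subseteq A$ with $\mu(A\setminus F_\varepsilon)<\varepsilon$, so $\sum_{a\in F_\varepsilon} p(a,\omega_0) > 1-\varepsilon$ for every $\omega_0$, and hence $\sum_{a\in F_\varepsilon} R_{n-1}(a,\omega_0) \ge (1-Q_{n-1}) - \varepsilon$ with $Q_{n-1}:=\sum_{k<n} q_k$. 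Pigeonholing produces some $a^*\in F_\varepsilon$ with $R_{n-1}(a^*,\omega_0) \ge ((1-Q_{n-1})-\varepsilon)/|F_\varepsilon|$, giving
\[
q_n \;\ge\; \frac{(1-Q_{n-1}) - \varepsilon}{|F_\varepsilon|} \;-\; 2\var_n.
\]
If $1-Q_n$ failed to tend to $0$, fixing small $\varepsilon$ and taking $n$ large (so that $\var_n$ is negligible compared to $|F_\varepsilon|^{-1}$) would force $q_n$ bounded below by a fixed positive constant along an infinite subsequence, contradicting $\sum q_n \le 1$.

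For the finite-alphabet equivalence, the direction $(\Rightarrow)$ is immediate from \eqref{eq:tv-lookback}: a single-coordinate difference is controlled by total variation, so $\var_n \le \PP(L_0>n)$, and $\sum_n \var_n \le \sum_n \PP(L_0>n) = \expect[L_0]$. For $(\Leftarrow)$, apply the construction above with $F_\varepsilon = A$ and $\varepsilon = 0$, producing the sharper bound $q_n \ge (1-Q_{n-1})/|A| - 2\var_n$ and hence the recursion
\[
1 - Q_n \;\le\; \rho\,(1-Q_{n-1}) + 2\var_n, \qquad \rho = 1 - \tfrac{1}{|A|}.
\]
Iterating yields $1 - Q_n \le \rho^n + 2\sum_{k=1}^n \rho^{n-k}\var_k$; summing over $n$ and interchanging the resulting double sum gives $\sum_n (1-Q_n) \le (1-\rho)^{-1}\bigl(1 + 2\sum_k \var_k\bigr) < \infty$, which is exactly $\expect[L_0]$ under the constructed coupling.
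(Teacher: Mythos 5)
Your proposal is correct and takes essentially the same route as the paper's own proof: a greedy recursive extraction of deterministic point masses, with the extraction rate bounded below by a pigeonhole over a finite set of symbols carrying most of the finite dominating measure, and the uniform-martingale variation controlling the error (your inf-over-extensions $\beta_n$ plays exactly the role of the paper's choice $p_k = r_k - \var(n_k)$, and your recursion $1-Q_n \le (1-\tfrac{1}{|A|})(1-Q_{n-1}) + 2\var_n$ mirrors the paper's sparse-sequence scheme with factor $(1-\tfrac{1}{M^2})$ for the finite-expectation direction). The only blemish is a harmless slip in the forward direction: $\sum_{n\ge 1}\PP(L_0>n) = \expect\left(L_0\right)-1$ rather than $\expect\left(L_0\right)$, which does not affect the conclusion.
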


As a corollary, note that if $A$ is a finite set, every stationary process on $A^{\mathbb{Z}}$ has a finite dominating measure: for example, for each $a \in A$, set $\mu\left(a\right) = 1$ so that $\mu\left(A\right) = |A| < \infty$.  Thus, Theorem \ref{thm:countable} simplifies for processes on a finite alphabet.

\begin{cor}[Finite Alphabets]
\label{thm:ergfinlet}
Let $A$ be a finite set and $\left(\left(X_i\right)_{i \in \mathbb{Z}}, \mathbb{P}\right)$ be a uniform martingale.  Then $\left(\left(X_i\right)_{i \in \mathbb{Z}}, \mathbb{P}\right)$ is a deterministic random Markov process and furthermore has a representation as a complete random Markov process with finite expect look-back distance if{f} $\sum_n \var_n (\mathbb{P}) < \infty$.
\end{cor}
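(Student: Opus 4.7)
The plan is to derive this corollary directly from Theorem \ref{thm:countable} by producing a finite dominating measure for free on any finite alphabet. Both conclusions of the corollary then drop out of the two parts of the theorem, with nothing extra to prove.

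First I would exhibit the dominating measure. Since $A$ is finite, define $\mu$ on $A$ by $\mu(\{a\}) = 1$ for every $a \in A$, so $\mu(A) = |A| < \infty$. For any event $E$ in the $\sigma$-algebra generated by $(X_i)_{i<0}$ and any subset $A_0 \subseteq A$, we trivially have
\[
\mathbb{P}(X_0 \in A_0 \mid E) \leq 1 \leq |A_0| = \mu(A_0),
\]
so $\mu$ is a finite dominating measure in the sense of Definition \ref{def:dommeasure}. Hence the hypotheses of Theorem \ref{thm:countable} are met (using that any finite alphabet is, in particular, countable).

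Next I would apply the first assertion of Theorem \ref{thm:countable} to conclude that $((X_i)_{i\in\mathbb{Z}}, \mathbb{P})$ is a deterministic random Markov process. For the characterization of finite expected look-back distance, I would invoke the second assertion of Theorem \ref{thm:countable}, which handles precisely the finite-alphabet case and states that finite expected look-back time is equivalent to summability of the variations, $\sum_n \var_n(\mathbb{P}) < \infty$. This gives both statements of the corollary.

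There is no genuine obstacle: the only substantive observation is that on a finite alphabet the counting measure is automatically a finite dominating measure, after which the corollary is a verbatim specialization of Theorem \ref{thm:countable}. The real work is entirely contained in the proof of Theorem \ref{thm:countable} itself.
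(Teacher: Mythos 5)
Your proposal is correct and matches the paper exactly: the paper derives this corollary from Theorem \ref{thm:countable} by the same observation that counting measure ($\mu(a)=1$ for each $a\in A$) is a finite dominating measure on any finite alphabet, after which both conclusions are verbatim specializations of parts (a) and (b) of the theorem. Nothing further is needed.
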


Theorem \ref{thm:countable} is best-possible in the sense that there are uniform martingales on countable alphabets without a dominating measure that are not random Markov processes.  Such an example is given in Section \ref{sec:ctble}.

For uncountable alphabets, there are examples of uniform martingales on an uncountable alphabet with a finite dominating measure that are not random Markov processes.  Such an example is given in Section \ref{sec:unctble}, where a stronger condition (Berbee's ratio condition, Definition \ref{cond:four} below) is considered that implies that a process is a uniform martingale and also a random Markov process. 

Berbee~\cite{Berbee87} considered Markov representations of stationary processes, based on the properties of their associated $g$-functions.  The condition on $g$-functions that was considered by Berbee in \cite{Berbee87} was different than that for uniform martingales (Definition \ref{def:um}).  Given a stationary process $((X_i)_{i \in \mathbb{Z}}, \mathbb{P})$ and $n \geq 1$, define
\begin{multline}\label{eq:Berbee-cond}
r_n = r_n(\mathbb{P}) = \log\bigg(\sup\bigg\{\frac{\mathbb{P}(X_0 = x_0 \mid X_{-1} =  x_{-1}, \ldots)}{\mathbb{P}(X_0 = y_0 \mid X_{-1} = y_{-1}, \ldots)} \bigg\rvert \\ x_0 = y_0, x_{-1} = y_{-1}, \ldots, x_{-n} = y_{-n} \bigg\} \bigg).
\end{multline}
Reframing the results in the language of random Markov processes, Berbee showed that if $\left(\left(X_i\right)_{i \in \mathbb{Z}}, \mathbb{P}\right)$ is a uniform martingale on a finite alphabet with $\sum_n r_n < \infty$, then $\left(\left(X_i\right)_{i \in \mathbb{Z}}, \mathbb{P}\right)$ is a random Markov process.   Theorem \ref{thm:countable} provides a stronger result when applied to a process on a finite alphabet and Theorem \ref{thm:ratio} below shows that a closely related condition is sufficient for any process, even on an uncountable alphabet, to be a random Markov process.

The following definition (Definition \ref{cond:four} below) is equivalent in the case of a countable alphabet to the underlying stationary process having $\{r_n\}_{n \geq 1}$ as in Equation \ref{eq:Berbee-cond} satisfying $\lim_{n \to \infty} r_n = 0$.   A stationary process satisfying the condition in Definition \ref{cond:four} is some times also called \emph{log-continuous}, due to the formulation from Equation \eqref{eq:Berbee-cond}. 

\begin{defi}[Berbee's ratio condition]
\label{cond:four}
A stationary process on an alphabet $A$ is said to satisfy \emph{Berbee's ratio condition} (or simply the \emph{ratio condition}) if{f} for every $\varepsilon >0$ there exists $n = n(\varepsilon)$ so that for every $(\omega_i)_{i<0} \in A^{\mathbb{Z^-}}$ and every measurable set $E_0 \subseteq A$,
\begin{equation}\label{eq:ratiodef}
\left| \frac{\PP\bracks{X_0 \in E_0 \mid\bracks{X_i}_{i\in\ZZ_{-}}=\bracks{\omega_i}_{i\in\ZZ_{-}}}}{\PP\bracks{X_0 \in E_0\mid \bracks{X_{i}}_{-m}^{-1}=\bracks{\omega_{i}}_{-m}^{-1}}} - 1\right| < \varepsilon.
\end{equation}

Note that in the expression in equation \eqref{eq:ratiodef}, a $0$ only appears in the denominator when there is also a $0$ in the numerator.  The convention adopted for this possible scenario is that $\frac{0}{0} = 1$.
\end{defi}

As in the definition of a random Markov process, Definition \ref{def:randomMarkov}, it suffices that the condition in \eqref{eq:ratiodef} hold for merely almost all $\mathbf{\omega}$.

Examples are given in Section \ref{sec:unctble} to show that not every uniform martingale satisfies the ratio condition and not every stationary process that satisfies the ratio condition has a dominating measure.  However, even in the case where the alphabet is uncountable, every stationary process that satisfies Berbee's ratio condition is a random Markov process; this is stated formally in the following theorem. 

\begin{thm}\label{thm:ratio}
Let $A$ be any set and $\Omega = A^{\mathbb{Z}}$.  Let $\mathbb{P}$ be a stationary probability measure on $\Omega$ so that $\left(\Omega, \mathbb{P}\right)$ satisfies the ratio condition, then $\left(\Omega, \mathbb{P}\right)$ is a random Markov process.
\end{thm}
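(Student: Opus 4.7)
The plan is to construct, for $\PP$-a.e.\ past, a decomposition of the conditional law of $X_0$ as a convex mixture
\begin{equation*}
\PP\bracks{X_0\in\cdot\mid (X_i)_{i<0} = (\omega_i)_{i<0}} \;=\; \sum_{k\geq 1} p_k\, q_k\bracks{\cdot\mid (\omega_i)_{-n_k}^{-1}},
\end{equation*}
where $n_1<n_2<\cdots$ are fixed look-back lengths depending only on $\PP$, $(p_k)_{k\geq 1}$ is a probability distribution independent of the past, and each $q_k$ is a probability kernel on $A$ depending only on the $n_k$-past. From such a decomposition the coupling $\hat\PP$ is read off: declare $\hat\PP(L_0 = n_k)=p_k$ independently of the past, and set $\hat\PP(X_0\in E_0\mid L_0 = n_k, (X_i)_{i<0}) = q_k(E_0\mid (X_i)_{-n_k}^{-1})$. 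The $X$-marginal is then preserved, $L_0$ is independent of the past by construction, the defining identity \eqref{eq:rm-def} holds, and stationarity of the joint process is inherited from stationarity of $(X_i)$ by extending the construction shift-invariantly via the Kolmogorov extension theorem.

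First, I extract parameters from the ratio condition. Fix $\varepsilon_k\in(0,1)$ with $\varepsilon_k\downarrow 0$ and $\prod_k\varepsilon_k = 0$, say $\varepsilon_k = 2^{-k}$. Using Definition \ref{cond:four}, inductively pick $n_1<n_2<\cdots$ so that for every $k$, every $m\geq n_k$, a.e.\ past, and every measurable $E_0\subseteq A$,
\begin{equation*}
(1-\varepsilon_k)\,\PP\bracks{X_0\in E_0\mid (X_i)_{-m}^{-1}} \;\leq\; \PP\bracks{X_0\in E_0\mid (X_i)_{i<0}} \;\leq\; (1+\varepsilon_k)\,\PP\bracks{X_0\in E_0\mid (X_i)_{-m}^{-1}}.
\end{equation*}
Put $p_k := (1-\varepsilon_k)\prod_{j<k}\varepsilon_j$; a telescoping calculation yields $\sum_{k\geq 1}p_k = 1 - \prod_k\varepsilon_k = 1$, so $(p_k)$ is a probability distribution on $\{n_1,n_2,\ldots\}$.

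Next, I construct the kernels $q_k$ by induction. Let $\mu_n(\cdot\mid\cdot)$ denote the conditional law of $X_0$ given the $n$-past and write $\mu_\infty$ for the conditional law given the full past. Take $q_1 := \mu_{n_1}$; the lower ratio bound at $n_1$ yields $\mu_\infty \geq (1-\varepsilon_1)\mu_{n_1} = p_1 q_1$, so the residual $\rho_2 := \mu_\infty - p_1 q_1$ is a non-negative (past-dependent) kernel on $A$ of total mass $\varepsilon_1$. Inductively, assuming $\rho_k$ is non-negative of mass $\prod_{j<k}\varepsilon_j$, I define $q_k$ as a suitable renormalisation of $\rho_k$ measurable with respect to the $n_k$-past (for instance, the conditional expectation of $\rho_k$ over the full past given the $n_k$-past, rescaled to a probability measure) and verify, via the ratio bound at $n_k$ together with the inductive structure, that $p_k q_k \leq \rho_k$. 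This gives $\rho_{k+1}:=\rho_k-p_k q_k\geq 0$ with mass $\prod_{j\leq k}\varepsilon_j \to 0$; summing over $k$ yields $\mu_\infty = \sum_{k\geq 1} p_k q_k$ a.e.

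The main obstacle is precisely the inductive bound $p_k q_k \leq \rho_k$. The naive choice $q_k = \mu_{n_k}$ already fails at $k=2$, because $\rho_2 = \mu_\infty-(1-\varepsilon_1)\mu_{n_1}$ depends on the full past and is not uniformly dominated by any multiple of $\mu_{n_2}$. The fix requires choosing $q_k$ as a past-average of $\rho_k$ with respect to the $n_k$-past, and combining the ratio condition applied simultaneously at all of $n_1,\ldots,n_k$, the backward-martingale consistency $\expect[\mu_m \mid (X_i)_{-n_k}^{-1}]=\mu_{n_k}$ for $m\geq n_k$, and the multiplicative factor $(1-\varepsilon_k)$ built into $p_k$, to secure the needed pointwise domination of the residual. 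Once the decomposition is established, marginal preservation, independence of $L_0$ from the past, the RMP identity \eqref{eq:rm-def}, and joint stationarity are all routine consequences of the construction.
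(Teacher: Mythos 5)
Your overall architecture --- decompose $\PP\bracks{X_0\in\cdot\mid (X_i)_{i<0}}$ as $\sum_k p_k q_k$ with $q_k$ depending only on the $n_k$-past and read off the coupling --- is exactly the paper's, and your structural idea of taking $q_k$ proportional to the conditional expectation of the residual given the $n_k$-past is sound. But the step you yourself label ``the main obstacle,'' the inductive domination $p_k q_k\leq\rho_k$, is never proved, and with the parameters you actually fixed it is \emph{false}. Write $\mu_\infty$ and $\mu_n$ for the conditional laws of $X_0$ given the full past and the $n$-past. Since $\sum_{j<k}p_jq_j$ is $n_{k-1}$-past measurable, the tower property gives the exact identity $\bar\rho_k-\rho_k=\mu_{n_k}-\mu_\infty$, where $\bar\rho_k=\expect\left[\rho_k\mid (X_i)_{-n_k}^{-1}\right]$; hence with $p_k=(1-\varepsilon_k)\prod_{j<k}\varepsilon_j$ and $q_k=\bar\rho_k/\prod_{j<k}\varepsilon_j$, the required bound $p_kq_k\leq\rho_k$ is \emph{equivalent} to
\[
(1-\varepsilon_k)\bracks{\mu_{n_k}(E)-\mu_\infty(E)}\;\leq\;\varepsilon_k\,\rho_k(E)\qquad\text{for every measurable }E\subseteq A.
\]
You chose $n_k$ so that the ratio condition holds at $n_k$ with tolerance $\varepsilon_k$ --- the \emph{same} $\varepsilon_k$ that enters the weights. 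That tolerance allows $\mu_{n_k}(E)-\mu_\infty(E)$ to be of order $\varepsilon_k\mu_{n_k}(E)$, while $\rho_k$ has total mass only $\prod_{j<k}\varepsilon_j$ and, crucially, admits no pointwise lower bound under your hypotheses: already at $k=2$, the ratio condition permits pasts along which $\mu_\infty(E)/\mu_{n_1}(E)$ is arbitrarily close to $1-\varepsilon_1$, making $\rho_2(E)=\mu_\infty(E)-(1-\varepsilon_1)\mu_{n_1}(E)$ arbitrarily small relative to $\mu_{n_2}(E)-\mu_\infty(E)$, whence $(1-\varepsilon_2)\bar\rho_2(E)>\rho_2(E)$. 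So the induction does not close as stated, and this is the analytic heart of the theorem, not a routine verification.

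The gap is repairable, but only by decoupling the ratio-condition tolerances from the mixture weights. If at stage $k$ you choose $n_k$ with tolerance $\delta_k$ and maintain the pointwise invariant $\rho_k\geq\eta_k\mu_\infty$, then since $\mu_{n_k}-\mu_\infty\leq 2\delta_k\mu_\infty$ (for $\delta_k\leq 1/2$), the displayed inequality follows from $\delta_k\leq\varepsilon_k\eta_k/4$, and the identity above gives $\rho_{k+1}=\rho_k-(1-\varepsilon_k)\bar\rho_k\geq(\varepsilon_k\eta_k-2\delta_k)\mu_\infty$, so the invariant persists with $\eta_{k+1}\geq\varepsilon_k\eta_k/2>0$; all constants are computable in advance from $(\varepsilon_j)_{j\geq1}$. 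The paper instead avoids residual tracking entirely: with arbitrary weights $(p_i)$ and $s_i=\sum_{j\leq i}p_j$, it sets the table measure by $p_{i+1}\tau_{\mathbf{\omega},i+1}=s_{i+1}\mu_{n_{i+1}}-s_i\mu_{n_i}$, so the cumulative mixture through level $i$ is exactly $s_i\mu_{n_i}$ --- itself $n_i$-past measurable --- and nonnegativity of each table value reduces to the single estimate $\mu_{n_{i+1}}(E)/\mu_{n_i}(E)\geq s_i/s_{i+1}$, obtained by passing both measures through $\mu_\infty$ with tolerance of order $p_{i+1}/s_{i+1}$. That requirement is far milder than what your residual scheme needs (tolerances comparable to cumulative mass, rather than to the superexponentially shrinking quantity $\varepsilon_k\eta_k$), which is why the paper's decomposition is the cleaner one to adopt; you would also need, as the paper notes, a remark that $\mathbf{\omega}\mapsto q_k(E\mid\cdot)$ is $\PP$-measurable before assembling $\hat{\PP}$.
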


Finally, some open questions and conjectures are given in Section \ref{sec:open}.

\subsection{Background}
\label{sec:gfunctions}

Random Markov processes and uniform martingales, both introduced by Kalikow~\cite{Kalikow90}, are related to the study of `$g$-functions'.  Roughly, a $g$~-~function determines the probability distribution on the alphabet given a possibly infinite word.  These are also known as transition probabilities.  This section gives a short background on both uniform martingales and (continuous) $g$-functions, including, for completeness, some sketches of known constructions for measure for certain $g$-functions which are used throughout. It should be noted that this is merely an overview; we do not attempt to survey this area in its entirety. For further information, we point the reader towards the many references given within this section.

The notion of a probability distribution on the present state of a process depending on infinitely many past states was introduced by Onicescu and Mihoc \cite{OM35} under the name `cha\^{i}nes \`{a} liaisons compl\`{e}tes' (now called `chains with complete connections') and subsequently developed by Doeblin and Fortet~\cite{DF37}.  Connections between a number of closely related definitions are given in lecture notes by Maillard \cite{Maillard07}.  The terminology and notation used in this paper are as follows.
 
\begin{defi}
For a countable set $A$, a function $f : A \times \prod_{i<0} A \to [0,1]$ is called a \emph{$g$-function} if{f} for every $\left(\omega_i\right)_{i<0} \in A^{\mathbb{Z}^-}$,
\begin{equation}\label{eq:g-function}
\sum_{a \in A} f\left(a, \omega_{-1}, \omega_{-2}, \ldots\right) = 1.
\end{equation}
\end{defi}

Every stationary process $\left(\left(X_i\right)_{i \in \mathbb{Z}}, \mathbb{P}\right)$ on a countable alphabet $A$ corresponds to a $g$-function defined by
\begin{equation}\label{eq:g-measure}
f\left(a, \omega_{-1}, \omega_{-2}, \ldots\right) = \mathbb{P}\left(X_0 = a \mid X_{-1} = \omega_{-1}, X_{-2} = \omega_{-2}, \ldots\right).
\end{equation}
For a particular $g$-function $f$, any probability measure that satisfies equation \eqref{eq:g-measure} is called a $g$-measure for $f$.  For every $k$, define the \emph{$k$-th variation of $f$} by 
\begin{multline}\label{def:nvar-g-fn}
\var_k\left(f\right) = \sup\left\{\left|f\left(x_0, x_{-1}, x_{-2}, \ldots\right) - f\left(y_0, y_{-1}, y_{-2}, \ldots\right)\right|\right.\\
\left.\lvert x_0 =y_0, x_{-1} = y_{-1}, \ldots, x_{-k} = y_{-k}\right\}.
\end{multline}
If $f$ satisfies equation \eqref{eq:g-measure} for a stationary process $\left(\left(X_i\right)_{i \in \mathbb{Z}}, \mathbb{P}\right)$, write $\var_k (\mathbb{P})$ for $\var_k\left(f\right)$.  When the $g$-function in question is clear from context, we shall sometimes write $\var(k)$.

The notion of a $g$-function is reserved here for processes on a countable alphabet.  

Keane~\cite{Keane72} looked at those $g$-functions that are continuous as linear forms on the space of probability measures for a compact metric space.  For stationary distributions on a finite alphabet, a uniform martingale corresponds precisely to a continuous $g$-function.  In the same paper, Keane proved that if $A$ is finite and $f$ is a continuous $g$-function, there exists at least one $g$-measure for $f$ and gave certain sufficient conditions for uniqueness of these measures.  Since then, there have been a number of results related to the existence and uniqueness of $g$-measures for a particular $g$-function~\cite{Berbee87, BM93, Hulse06, JOP07, Keane72, MU01, Sarig99}.  In the remainder of this section, we review some results related to $g$-functions.

  Given an alphabet $A$ and a stochastic process $\left(X_i\right)_{i \in \mathbb{Z}} \in A^{\mathbb{Z}}$ with probability measure $\mathbb{P}$, the function $f: \prod_{i \leq 0} A~\to~[0,1]$ given by
\begin{equation}\label{eq:marginal_fn}
 f\left(a_0, a_{-1}, a_{-2}, \ldots\right) = \mathbb{P}\left(X_0 = a_0 \mid X_{-1}=a_{-1}, X_{-2} = a_{-2}, \ldots\right)
\end{equation}
is uniquely determined up to sets of $\mathbb{P}$-measure $0$.  By definition, for any $\mathbf{a} \in \prod_{i <0} A$, the function $f\left(\cdot\ \mathbf{a}\right): A \to [0,1]$ determines a probability measure on $A$.  If $\left(X_i\right)_{i \in \mathbb{Z}}$ is stationary then also, for any $i \in \mathbb{Z}$,
\begin{equation}\label{eq:stationary}
 \mathbb{P}\left(X_i = a_0 \mid X_{i-1}=a_{-1}, X_{i-2} = a_{-2}, \ldots\right) = f\left(a_0, a_{-1}, a_{-2}, \ldots\right).
\end{equation}

Further, when $|A| < \infty$, the set $\prod_{i \leq 0} A$ can be endowed with a metric and a compact topology.

Doeblin and Fortet~\cite{DF37} examined these types of functions 
\[
\mathbf{a} \mapsto \PP\left(X_0 = a_0 \mid X_{-1} = a_{-1}, X_{-2} = a_{-2}, \ldots\right),
\]
 which they called \emph{chains with infinite connections} and gave some results on properties of some limits of these functions, under certain conditions.

A function $f$, as in equations \eqref{eq:marginal_fn} and \eqref{eq:stationary}, need not necessarily arise from a fixed probability measure.  Keane~\cite{Keane72} looked at the question of determining the collection of probability measures $\mu$ on $A^{\mathbb{Z}}$ for which 
\[
\mu\left(X_0 =a_0 \mid X_{-1} = a_{-1}, \ldots\right) = f\left(a_0, a_{-1}, a_{-2}, \ldots\right),
\]
 and in particular, under which conditions there is exactly one such measure.

The use of the letter $g$ in the terms `$g$-function' and `$g$-measure' seems to originate in a paper of Keane~\cite{Keane72}. There, the letter $g$ was used for the functions on $\prod_{i<0} A$, and a `$g$-measure' was one that was associated with precisely the function $g$.  Later, the term `$g$-function' came into use, to describe this class of functions.

As previously noted, if $f$ is a continuous $g$-function and $\mu$ is a $g$-measure for $f$, then by definition, $\mu$ is a uniform martingale.

Markov processes are a particular example of continuous $g$-functions.  Given a transition matrix $P = \left(p_{a, b}\right)_{a, b, \in A}$, for a finite state space $A$, the function $f$ given, for $a, b \in A$ and $\mathbf{x} \in A^-$, by $f\left(b, a, \mathbf{x}\right) = p_{a, b}$ is a continuous $g$-function for $A^{\mathbb{Z}^-}$.  

Note that, even in the case where $A = \left\{0,1\right\}$, if a $g$-function $f$ is not continuous, there need not be any $g$-measures for $f$, as the following example shows.

\begin{example}
 Let $A=\left\{0,1\right\}$ and define $f\left(a, \mathbf{x}\right)$ as follows:  If $\mathbf{x}$ contains an infinite string of consecutive $0$s, set $f\left(1, \mathbf{x}\right) = 1$ and $f\left(0, \mathbf{x}\right) = 0$.  If $\mathbf{x}$ does not contain an infinite string of $0$s, set $f\left(1, \mathbf{x}\right)=0$ and $f\left(0, \mathbf{x}\right)=1$.

The function $f$ is not continuous and one can check that there is no stationary $g$-measure for this $g$-function.\qed
\end{example}
 
Using fixed-point theorems, Keane~\cite{Keane72}, showed that every continuous $g$-function on a finite alphabet has at least one $g$-measure.  Such probability measures can also be constructed directly by taking limits of `nearly-stationary' measures on finite words constructed from a $g$-function.  Such a method for constructing stationary processes can be found, for example, in the book of Kalikow and McCutcheon~\cite[Section 2.11]{KMcC10}.

\begin{prop}[Keane~\cite{Keane72}]\label{Prop:existence}
 Let $(A, d)$ be a compact metric space and let $f:A^{\mathbb{Z}^-} \to [0,1]$ be a $g$-function that is continuous in the product topology on $A^{\mathbb{Z}^-}$.  There is at least one stationary $g$-measure for $f$.
\end{prop}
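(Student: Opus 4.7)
The plan is to produce a stationary $g$-measure as a fixed point of a Markov-style operator derived from $f$, via a compactness-plus-fixed-point argument in the weak-$*$ topology. Since $A$ is compact metric, the product space $A^{\mathbb{Z}^-}$ is compact and metrisable by Tychonoff's theorem, so the set $M(A^{\mathbb{Z}^-})$ of Borel probability measures on it is a compact convex subset of the dual of $C(A^{\mathbb{Z}^-})$ in the weak-$*$ topology.

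First I would define an operator $T:M(A^{\mathbb{Z}^-})\to M(A^{\mathbb{Z}^-})$ as follows: $T\mu$ is the law of the shifted sequence $(\ldots,\omega_{-2},\omega_{-1},a)$ obtained by drawing $(\omega_i)_{i<0}\sim\mu$ and then sampling $a\in A$ from the probability distribution on $A$ encoded by $f(\cdot,\omega_{-1},\omega_{-2},\ldots)$. Equivalently, for every continuous test function $\phi$,
\[
\int \phi\,d(T\mu) \;=\; \int\!\!\int \phi(\ldots,\omega_{-2},\omega_{-1},a)\,f(a,\omega)\,d\kappa(a)\,d\mu(\omega),
\]
where $\kappa$ is counting measure in the countable-alphabet case. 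A fixed point $\mu^{\ast}$ of $T$ then gives a distribution on the past whose $f$-update reproduces itself, so its Ionescu--Tulcea extension to $A^{\mathbb{Z}}$ via repeated application of $f$ is shift-invariant and has one-sided conditionals equal to $f$---that is, a stationary $g$-measure.

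Next I would verify that $T$ is affine (immediate) and weak-$*$ continuous. Continuity is the only place the hypothesis on $f$ enters: given continuous $\phi$ on the compact space $A^{\mathbb{Z}^-}$, the function
\[
\psi_{\phi}(\omega) \;=\; \int_{A} \phi(\ldots,\omega_{-2},\omega_{-1},a)\,f(a,\omega)\,d\kappa(a)
\]
is continuous on $A^{\mathbb{Z}^-}$ because $\phi$ is uniformly continuous and $f$ is continuous; thus $\mu\mapsto\int\psi_{\phi}\,d\mu=\int\phi\,d(T\mu)$ is weak-$*$ continuous. With $T$ a continuous affine self-map of the compact convex set $M(A^{\mathbb{Z}^-})$, the Schauder--Tychonoff theorem (or the Markov--Kakutani theorem, exploiting the affineness of $T$) delivers a fixed point $\mu^{\ast}$, from which the desired stationary $g$-measure on $A^{\mathbb{Z}}$ is obtained by extension.

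The step I expect to be the main obstacle is the continuity of $T$, in two respects. For countable $A$ the integral becomes a sum $\psi_{\phi}(\omega)=\sum_{a\in A}\phi(\ldots,\omega_{-1},a)f(a,\omega)$ and one needs a dominated-convergence or equicontinuity argument to pass from pointwise continuity of the summands to continuity of the infinite sum. For uncountable compact metric $A$, the $g$-function must be reinterpreted so that $f$ gives a continuous probability kernel on $A$, so that the integrand is jointly continuous and the integral in $a$ depends continuously on $\omega$. Once this continuity is secured, both the fixed-point step and the verification that the constructed two-sided measure is stationary with conditionals equal to $f$ almost surely are routine, the latter following from the defining identity of $T$ applied to cylinder sets.
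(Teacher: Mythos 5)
Your proposal is correct and is essentially the proof the paper relies on: the paper does not prove this proposition itself but attributes it to Keane~\cite{Keane72} ``using fixed-point theorems,'' and your transfer-operator argument---an affine, weak-$*$ continuous self-map of the compact convex set of measures on $A^{\mathbb{Z}^-}$, a Markov--Kakutani/Schauder fixed point, then extension to a shift-invariant measure on $A^{\mathbb{Z}}$ with conditionals $f$---is exactly that approach carried out, with your two flagged obstacles correctly identified (for countable $A$, uniform convergence of $\sum_a \phi(\omega a)f(a,\omega)$ follows from Dini's theorem applied to the partial sums of $\sum_a f(a,\omega)=1$; for uncountable $A$, $f$ must indeed be read as a continuous probability kernel, since the paper's summation definition of a $g$-function is stated only for countable alphabets). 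The paper also mentions an alternative, more constructive route via limits of nearly-stationary measures on finite words (Kalikow--McCutcheon~\cite[Section 2.11]{KMcC10}), but your route matches the cited one.
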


If the alphabet $A$ is finite, then the set $A$ with the trivial metric is compact and $g$-measures for continuous $g$-functions correspond exactly to uniform martingales.

In general, this type of result need not be possible in the case of infinite alphabets: the same technique need not yield a probability measure, nor even a non-zero measure.  For example, one can define a Markov process on $\mathbb{Z}$ in terms of a $g$-function that has no stationary measure.

In this paper, a number of examples are described in terms of their $g$-functions and these types of existence results are needed to verify that corresponding $g$-measures exist.  As some of the properties of these $g$-measures are used in the proofs, in practice, most examples of stationary processes in this paper are constructed explicitly by defining measures on all cylinder sets.

One of the methods used repeatedly in this paper to construct stationary processes is one that the third author learned from Ornstein in about 1978.  This construction, that we shall describe shortly, was used by Alexander and Kalikow \cite{AK92} to study randomly generated stationary processes.  This method recursively constructs stationary measures on finite words in the alphabet as follows.  Given an alphabet $A$, for each $n \geq 1$, a stationary probability measure $\mu_n$ is defined on $A^n$ so that the sequence of measures $\{\mu_n\}_{n \geq 1}$ is consistent.  The recursive construction begins by choosing a probability measure $\mu_1$ on $A$.  For every $n \geq 1$, given a stationary probability measure, $\mu_n$, on $A^n$, a consistent stationary measure on $A^{n+1}$ is defined in the following manner.  For every word of length $n-1$, $a_1 a_2 \ldots a_{n-1} \in A^{n-1}$, the measure $\mu_n$ gives conditional measures on all words of the form $x_0 a_1 a_2 \ldots a_{n-1}$ and all words of the form $a_1 a_2 \ldots a_{n-1} x_n$.  
Coupling these two conditional measures in any way, and even differently for different words $a_1 a_2 \ldots a_{n-1}$ yields a measure $\mu_{n+1}$ on $A^{n+1}$ with the property that if $\mathbf{a} \in A^n$, then
\begin{equation}\label{eq:measure-constr}
\mu_{n+1}(\cdot \mathbf{a}) = \mu_{n+1}(\mathbf{a} \cdot) = \mu_n(\mathbf{a}).
\end{equation}
Thus, the measure $\mu_{n+1}$ is stationary since $\mu_n$ is stationary and these measures are consistent.  Note that in the step $n =1$ of the recursion, the unique word of length $n-1$ is the empty word.

This method is as general as possible since any stationary process can be constructed in this way.  Furthermore, in many cases, properties of the resulting measure can be incorporated into the recursion and proved along the way.  In the case that $A$ is either finite or countable, the construction of the measures on finite words is straightforward and can be given by defining the probability of each word individually.  For uncountable alphabets, constructing the measures on finite words may require some care, but in either case, once the collection of stationary probability measures on all finite words is given, these extend to a stationary probability measure on $A^{\mathbb{Z}}$ with all probabilities of cylinder sets given by the appropriate measure on finite words.

In much of the literature, the focus has been on uniqueness of $g$-measures, rather than simply existence.  In his paper introducing the notion of $g$-functions, Keane~\cite{Keane72} showed that if $A$ is finite and a function $f$ is both continuous and satisfies certain differentiability conditions, then there is a unique measure for $f$ that is also strongly mixing.  Further conditions for uniqueness were given by Berbee~\cite{Berbee87} and Hulse~\cite{Hulse06} for $g$-function on finite alphabets.  Results for existence and uniqueness on countable alphabets were considered by Johansson, \"{O}berg, and Pollicott~\cite{JOP07}, Sarig~\cite{Sarig99}, and Mauldin and Urba\'{n}ski~\cite{MU01}.  Examples of $g$-functions with more than one $g$-measure were given by Bramson and Kalikow~\cite{BM93}, by Hulse~\cite{Hulse06}, and by Berger, Hoffman and Sidoravicius~\cite{BHS08}.

A random Markov process on a countable alphabet can be expressed in terms of $g$-functions.  Let $\left(\left(X_n, L_n\right)_{n \in Z}, \mathbb{P}\right)$ be a random Markov process on alphabet $A$.  For each $n \geq 1$, and $a_0, a_{-1}, \ldots, a_{-n} \in A$, define
\begin{multline*}
f_n\left(a_0, a_{-1}, \ldots, a_{-n}\right) =\\ \mathbb{P}\left(X_0 = a_0 \mid X_{-1} = a_{-1}, \ldots, X_{-n} = a_{-n}\ \wedge L_0 = n\right ).                                                                                                                                                                                                                                                                                                                                                                                                                                                                                                                                                                                                                                                                                                                                                                                                                               \end{multline*}
Such a function $f_n$ is naturally extended to a continuous $g$-function on $A^{\mathbb{Z}^-}$ and by the definition of a random Markov process (Definition \ref{def:randomMarkov}),
\begin{align*}
 \mathbb{P}&\left(X_0=a_0 \mid X_{-1}=a_{-1}, X_{-2} = a_{-2}, \ldots\right) \\ 
 &=\sum_{n=1}^{\infty} \mathbb{P}\left(L_0 = n\right)\cdot f_n\left(a_0, a_{-1}, \ldots, a_{-n}\right)\\
  &= f\left(a_0, a_{-1}, a_{-2}, \ldots\right).
\end{align*}
In this way, a random Markov processes is a random mix of $n$-step Markov processes.  To see that this $g$-function $f$ is continuous, for every $\varepsilon >0$, let $N$ be such that for $M \geq N$, $\sum_{n \geq M} \mathbb{P}\left(L_0 =n\right) < \varepsilon$.  Then,
\[
 \left|f\left(a_0, a_{-1}, \ldots\right) - \sum_{n=1}^{M-1}\mathbb{P}\left(L_0=n\right)\cdot f_n\left(a_0, a_{-1}, \ldots, a_{-n}\right)\right|< \varepsilon.
\]
In particular, given a random Markov process, any other stationary process with the same $g$-function will also be a random Markov process.

To see that the notion of random Markov processes is genuinely different from $n$-step Markov processes, consider the following example.

\begin{example}
Define a $g$-function on the alphabet $\{0,1\} \times \mathbb{Z}$ as follows.  For each $a_0, a_{-1}, a_{-2}, \ldots \in \{0,1\}$ and $\ell_0, \ell_{-1}, \ell_{-2}, \ldots \in \mathbb{Z}^+$, define
\begin{equation}\label{eq:rm-notMarkov}
f((a_0, \ell_0), (a_{-1}, \ell_{-1}), (a_{-2}, \ell_{-2}), \ldots) = 
\begin{cases}
	\frac{1}{4} \cdot \frac{1}{2^{\ell_0}	}	&\text{if } a_0 = a_{-\ell_0}\\
	\frac{3}{4} \cdot \frac{1}{2^{\ell_0}	}	&\text{if } a_0 = 1 - a_{-\ell_0}.
\end{cases}
\end{equation}
One can show that there is a complete random Markov process $\left(\left(X_i, L_i\right)_{i \in \mathbb{Z}}, \mathbb{P}\right)$ that is a $g$-measure for the $g$-function in equation \eqref{eq:rm-notMarkov} so that for every $k \geq 1$, $\mathbb{P}(L_0 = k) = \frac{1}{2^k}$ and so that given $L_0 = k$ and $X_{-k}$, $X_0$ is equal to $X_{-k}$ with probability $1/4$ and different with probability $3/4$.  It is left as an exercise to the interested reader to show that this can be done with $\mathbb{P}\left(X_0 = 0\right) = \mathbb{P}\left(X_0=1\right)=\frac{1}{2}$.  Such a stationary process is a random Markov process, but not an $n$-step Markov chain for any $n$.
\qed \end{example}

Further consideration of the properties of random Markov processes were given by Kalikow, Katznelson and Weiss~\cite{KKW92}, who showed that zero entropy systems can be extended to a random Markov process.  Later, Rahe~\cite{Rahe93, Rahe94} gave extensions of some results for $n$-step Markov chains to random Markov chains for which the expected look-back distance is finite.

\section{Countable alphabets}
\label{sec:ctble}
\subsection{Proof of Theorem \ref{thm:countable}}

In this section the proof of Theorem \ref{thm:countable} is given showing that if a uniform martingale on a countable alphabet has a finite dominating measure, then it is also a deterministic random Markov process.  Recall that for the $n$-th variation (see equation \eqref{def:nvar}) of a $g$-function associated with a particular stationary process, we shall write $\var_n \mathbb{P}$ or $\var(n)$ interchangeably.

We now recall Theorem \ref{thm:countable} and give its proof in full.

\begin{thm1}
Let $\left(\left(X_i\right)_{i \in \mathbb{Z}}, \mathbb{P}\right)$ be a uniform martingale on a countable alphabet $A$ with a dominating measure $\mu$.  Then 
\begin{enumerate}[(a)]
	\item \label{part:count-determ} $\left(\left(X_i\right)_{i \in \mathbb{Z}}, \mathbb{P}\right)$ is a deterministic random Markov process, and further
	\item \label{part:finite-look-back} if the alphabet $A$ is finite, then the process is a deterministic random Markov process with finite expected look-back distance if{f} $\sum_{n \geq 1} \var_n \mathbb{P} < \infty$. 
\end{enumerate}
\end{thm1}

\begin{proof}
Let $\left(\left(X_i\right)_{i \in \mathbb{Z}}, \mathbb{P}\right)$ be a uniform martingale on a countable alphabet $A$.  Without loss of generality, assume that $A = \mathbb{Z}^+$.  First, to prove part \eqref{part:count-determ}, the random variables $\left(L_i\right)_{i \in \mathbb{Z}}$ are constructed recursively together with a stationary coupling that is a deterministic random Markov process.

Indeed, the table values, as in equation \eqref{eq:table-vals}, for the random Markov process are constructed in terms of a sequence of functions $(T_k)_{k \geq 0}$ together with a sequence of look-back distances $\{n_k\}_{k \geq 0}$ and a sequence of look-back probabilities $\{p_k\}_{k \geq 0}$, constructed recursively with the following properties:
\begin{enumerate}[(i)]
	\item \label{rec:unique} For every $k \geq 0$, $T_k : A^{n_k+1} \to [0,1]$ has the property that for every $\mathbf{\omega} \in A^{n_k}$, there is $a(\mathbf{\omega}) \in A$ with $T_k(a(\mathbf{\omega}), \mathbf{\omega}) = p_k$ and if $b \neq a(\mathbf{\omega})$, then $T_k(b, \mathbf{\omega}) = 0$.
	\item \label{rec:leq} For every $(\omega_i)_{i \leq -1} \in A^{\mathbb{Z}^-}$ and $b \in A$,
	\begin{equation}\label{eq:tableValues}
\sum_{i \leq k} T_i(b; (\omega_i)_{-n_i}^{-1}) \leq \mathbb{P}\left(X_0 = b \mid (X_i)_{i=-\infty}^{-1} = (\omega_i)_{i=-\infty}^{-1}\right).
\end{equation}
	\item \label{rec:converge} For every $k \geq 1$, there exists $(\omega_{k, i})_{i =- n_k}^{-1} \in A^{n_k}$ so that for every $b \in A$,
	\begin{equation}\label{eq:table-conv}
	\mathbb{P}\left(X_0 = b \mid (X_i)_{i=-n_{k}}^{-1} = (\omega_{k, i})_{i = -n_k}^{-1}\right) - \sum_{i = 1}^{k-1} T_i(b, (\omega_{k,i})_{i=-n_i}^{-1}) \leq p_k + \var(n_k) + \frac{1}{k}.
	\end{equation}
\end{enumerate}

Note that since the process is a uniform martingale, the sequence of $n$-th variations $(\var(n))_{n \geq 1}$ is non-increasing and satisfies $\lim_{n \to \infty} \var(n) = 0$.  It is possible that for some $N \geq 1$, $\var(N) = 0$ in which case the process is an $N$-step Markov process.  The existence of such an $N$ has no effect on the following construction.

To begin the recursive construction for $k = 0$, set $p_0 = n_0 = 0$ and let $T_0 \equiv 0$ (the $0$-function).  Note that the conditions in parts \eqref{rec:unique} and \eqref{rec:leq} are trivially satisfied and the condition in part \eqref{rec:converge} does not apply to $k = 0$.

For the recursion step, fix $k \geq 1$ and suppose that $T_0, T_1, \ldots, T_{k-1}$, $p_0, p_1, \ldots, p_{k-1}$ and $n_0, n_1, \ldots, n_{k-1}$ have been defined and satisfy conditions \eqref{rec:unique}, \eqref{rec:leq}, and \eqref{rec:converge} above.

For ease of notation, for any $n \geq n_{k-1}$ and $(\omega_i)_{i = -n}^{-1} \in A^n$ or $(\omega_i)_{i=-\infty}^{-1} \in A^{\mathbb{Z}^-}$ and $b \in A$, define
\begin{equation}\label{eq:left-over-measure}
P_{k-1}(b; (\omega_i)_{i \leq -1}) = \mathbb{P}\left(X_0 = b \mid (X_i)_{i\leq -1} = (\omega_i)_{i\leq -1}\right) - \sum_{i \leq k-1} T_i(b; (\omega_i)_{-n_i}^{-1}).
\end{equation}

By the condition \eqref{rec:leq}, for any $b$ and $(\omega_i)_{i \leq -1}$, then $P_{k-1}(b, (\omega_i)_{i \leq -1}) \geq 0$.  For any $(\omega_i)_{i \leq -1} \in A^{\mathbb{Z}^-}$ or $(\omega_i)_{i=-n}^{-1} \in A^n$, the function defined by $P_{k-1}(\cdot, (\omega_i)_{i \leq -1})$ is a positive measure on $A$ that is dominated by the measure $\mu$.  Also, for any $n \geq n_{k-1}$ and $\mathbf{\omega}, \mathbf{\omega}' \in A^{\mathbb{Z}^-}$ with $\omega_{-1} = \omega_{-1}', \ldots, \omega_{-n} = \omega_{-n}'$, then
\begin{equation}\label{eq:left-over-variation}
|P_{k-1}(b; \mathbf{\omega}) - P_{k-1}(b; \mathbf{\omega}')| \leq \var(n).
\end{equation}

Set $\gamma = 1 - \sum_{j = 0}^{k-1}p_j$ and let $M > 0$ be sufficiently large so that the dominating measure satisfies $\sum_{k > M} \mu\left(k\right) < \frac{\gamma}{10}$.  Then, for any $\left(\omega_i\right)_{i < 0} \in A^{\geq n_{k-1}}$ and any $n \geq n_{k-1}$,
\[\sum_{a =1}^{M}P_{k-1}(a; \left(\omega_i\right)_{-n}^{-1}) \geq \frac{9\gamma}{10}.\]
In particular, for each $n \geq n_{k-1}$ and $\left(\omega_i\right)_{-n}^{-1}$, there is an $a \in A$, depending on $(\omega_i)_{i=-n}^{-1}$ so that
\[
P_{k-1}\left(a;  \left(\omega_i\right)_{-n}^{-1}\right) \geq \frac{9\gamma}{10M}.
\]
Choose $n_k > n_{k-1}$ to be large enough so that $2\var(n_k) \leq \frac{9\gamma}{10M}$. That is, for every $\left(\omega_i\right)_{-n}^{-1}$, there is an $a \in A$ with
\begin{equation}\label{eq:likely-element}
P_{k-1}\left(a;  \left(\omega_i\right)_{-n}^{-1}\right) > 2 \var(n_k).
\end{equation}

For every $\mathbf{\omega} = \left(\omega_i\right)_{-n_k}^{-1} \in A^{n_k}$, define
\[
s_k\left(\mathbf{\omega}\right) = \max_{a \in A} \left\{P_{k-1}(a; \left(\omega_i\right)_{-n_k}^{-1})\right\}
\]
and let $a\left(\mathbf{\omega}\right) \in A$ achieve this maximum.  Set 
\begin{equation}\label{eq:rk}
r_k = \inf \left\{ s_k\left(\mathbf{\omega}\right) \mid \mathbf{\omega} \in A^{n_k}\right\}
\end{equation}
and define $p_k = r_k - \var\left(n_k\right)$.  By the choice of $n_k$ and inequality \eqref{eq:likely-element}, $p_k \geq 0$.  For any $a \in A$ and $\mathbf{\omega} \in A^{n_k}$, define
\[
T_k\left(a; \omega_{-1}, \ldots, \omega_{-n_k}\right) = 
	\begin{cases}
		p_k	&\text{if $a = a\left(\mathbf{\omega}\right)$}\\
		0	&\text{otherwise}.
	\end{cases}
\]                                                                                                                                                                                                                                                                                                                            
For any $\left(\omega_i\right)_{-\infty}^{-1} \in A^{\mathbb{Z}^-}$ and $a \in A$, by the choice of $n_k$, inequality \eqref{eq:likely-element}, and the definition of $p_k$, 
\[
T_k\left(a; \omega_{-1}, \ldots, \omega_{-n_k}\right) \leq P_{k-1}\left(a; \left(\omega_i\right)_{i=-\infty}^{-1}\right).
\] 
Thus, by the definition of $P_{k-1}$ (equation \eqref{eq:left-over-measure}), condition \eqref{rec:leq} is satisfied for this value of $k$.  Further, by the definition of $r_k$ in equation \eqref{eq:rk} and the definition of $p_k = r_k - \var(n_k)$, there exists some $\mathbf{\omega}_k$ so that for every $b \in A$,
\[
P_{k-1}(b; \mathbf{\omega}_k) \leq s(\mathbf{\omega}_k) \leq r_k + \frac{1}{k} = p_k + \var(n_k) + \frac{1}{k}.
\]
Thus, condition \eqref{rec:converge} is satisfied for this value of $k$ also.

This completes the recursive construction.  Given $(T_i)_{i \geq 1}$, a representation of the random Markov process is defined by setting, for every $k \geq 1$, $\hat{\mathbb{P}}\left(L_0 = n_k\right) = p_k$ and for every $a \in A$ and $\mathbf{\omega} \in A^{n_k}$,
\[
\hat{\mathbb{P}}\left(X_0 = a  \mid \left(X_i\right)_{i=-n_k}^{-1} = \left(\omega_i\right)_{i=-n_k}^{-1}\wedge L_0 = n_k\right) = \frac{1}{p_k}T_{k}\left(a; \omega_{-1}, \ldots, \omega_{-n_{k}}\right).
\]
Note that the functions given by $\frac{1}{p_k} T_k$ are the table values for the random Markov process being defined.

It remains to show that $\sum_{k \geq 1} p_k = 1$ and that 
\begin{multline}\label{eq:same-measure}
\sum_k \hat{\mathbb{P}}\left(X_0 = a \wedge L_0 = n_k \mid \left(X_i\right)_{-n_k}^{-1} = \left(\omega_i\right)_{-n_k}^{-1}\right)\\ = \mathbb{P}\left(X_0 = a \mid \left(X_i\right)_{-\infty}^{-1} = \left(\omega_i\right)_{-\infty}^{-1}\right).
\end{multline}

Note that, by construction $\sum_{k \geq 1} p_k \leq 1$ which implies that $\lim_{k \to \infty} p_k = 0$.  For every $k \geq 1$, let $\mathbf{\omega}_k \in A^{n_k}$ be defined so that $s_k\left(\mathbf{\omega}_k\right) \leq  r_k + \frac{1}{k} = p_k+\var\left(n_k\right) + \frac{1}{k}$.  Then, for every $a \in A$,
\[
P_k\left(a; \mathbf{\omega}_k\right) \leq s_k \leq p_k + \var\left(n_k\right) + \frac{1}{k}.
\]
Since $\lim_{k \to \infty} \left(p_k + \var\left(n_k\right) + \frac{1}{k}\right) = 0$, the sequence $\left(P_k\left(\cdot; \mathbf{\omega}_k\right)\right)_{k \geq 1}$ consists of positive measures on the countable set $A$, each dominated by $\mu$, converging to $0$ pointwise.  By the Lebesgue dominated convergence theorem, 
\[
\lim_{k \to \infty} 1 - \sum_{i=1}^k p_i = \lim_{k \to \infty}\sum_{a \in A} P_k\left(a; \mathbf{\omega}_k\right) = 0.
\] 
Thus, $\sum_{k \geq 0} p_k = 1$ and hence $\sum_{k \geq 1} \hat{\mathbb{P}}\left(X_0  = \cdot \wedge L_0 = n_k \mid \left(X_i\right)_{-n_k}^{-1} = \left(\omega_i\right)_{-n_k}^{-1}\right)$ is a probability measure on $A$ and so equation \ref{eq:same-measure} holds.

Consider now the expected value of the look-back distance for part \eqref{part:finite-look-back} of the theorem.  First, let $\left(\left(X_i, \hat{L}_i\right)_{i \in \mathbb{Z}}, \mathbb{P}\right)$ be a random Markov process with $\mathbb{E}\left(\hat{L}_0\right) < \infty$.  For every $n$,
$\var\left(n\right) \leq 2\cdot\mathbb{P}\left(\hat{L_0} > n\right)$ and so
\[
\sum_{n \geq 1} \var\left(n\right) \leq 2 \sum_{n \geq 1} \mathbb{P}\left(\hat{L}_0 > n\right) \leq 2\cdot\mathbb{E}\left(\hat{L}_0\right) < \infty.
\]

In the case that $A$ is finite, let $M \in \mathbb{Z}^+$ be such that $|A| = M < \infty$ and suppose that the process satisfies $\sum_{n \geq 1} \var(n) < \infty$.  The recursive construction given can be repeated as above, with the following changes.  For every $k \geq 1$, choose $n_k> n_{k-1}$ to be the smallest integer $n$ with
\begin{equation}\label{eq:careful_nk}
\min \left\{
	\inf_{\mathbf{\omega} \in A^n} 
		\left\{\max_{a \in A} P_{k-1}(a; \mathbf{\omega})\right\},
		 \left(1- \frac{1}{M^2}\right)r_{k-1}
	\right\} \geq 2\var(n).
\end{equation}
Such an $n$ is well-defined since the left-hand side of inequality \eqref{eq:careful_nk} is increasing in $n$ and bounded away from $0$ while the right-hand side is decreasing to $0$.  Define
\[
r_k = \min \left\{
	\inf_{\mathbf{\omega} \in A^n} 
		\left\{\max_{a \in A} P_{k-1}(a; \mathbf{\omega})\right\},
		 \left(1- \frac{1}{M^2}\right)r_{k-1}
	\right\}
\]
and set $p_k = r_k - \var(n_k)$.  By inequality \eqref{eq:careful_nk}, $p_k > 0$ and also $r_k \leq r_{k-1} \left(1 - \frac{1}{M^2}\right)$.

With this choice of $(p_k)_{k \geq 1}$ and $(n_k)_{k \geq 1}$, consider $\hat{\mathbb{E}}(L_0) = \sum_k p_k n_k$.  Note that for each $n \in [n_{k-1}, n_k-1]$, we have $\var(n) \leq r_k/2$.  Thus, setting $n_0 =r_0= 1$,
\begin{align*}
\sum_{n \geq 1} \var(n)
	&\geq \sum_{k \geq 1} (n_k - n_{k-1}) \frac{r_k}{2}\\
	& = \frac{1}{2}\sum_{k \geq 1} n_k r_k - \frac{1}{2}\sum_{k \geq 1} n_{k-1} r_k\\
	&\geq \frac{1}{2} \sum_{k \geq 1} n_k r_k - \frac{1}{2}\left(1 - \frac{1}{M^2}\right)\sum_{k \geq 1} n_{k-1} r_{k-1}\\
	& = \frac{1}{2M^2} \sum_{k \geq 1} n_k r_k - \frac{1}{2} \left(1 - \frac{1}{M^2}\right)\\
	&\geq \frac{1}{2M^2} \sum_{k \geq 1} n_k p_k - \frac{1}{2}.
\end{align*}
Thus, using the fact that $\sum \var(n) < \infty$,
\[
\hat{\mathbb{E}}(L_0) = \sum_{k \geq 1} n_k p_k \leq 2M^2\left(1 + \sum_{n \geq 1} \var(n) \right) < \infty.
\]
This completes the proof of the theorem.
\end{proof}

\subsection{Another construction for finite alphabets}\label{subsec:determ_construction}

The following section contains a different construction to show that every random Markov process on a finite alphabet is a deterministic random Markov process.  The result is not as strong as Theorem \ref{thm:countable} and is independent of further results, and can be skipped, but is presented as an alternative approach.

In the construction given in the proof of Proposition \ref{prop:determ} below, the digits of the binary expansion of probabilities of the `present state' given the `past' are used to define a new deterministic random Markov process from any random Markov process on a finite alphabet.  A key tool in this proof is the use of an injective function $F: (\mathbb{Z}^+)^{n+1} \to \mathbb{Z}^+$ with the property that for any $i_0, i_1, \ldots, i_n$, $F(i_0, i_1, \ldots, i_n) \geq i_0$.  Roughly, conditioned on a particular past, the event $\{L_0 =i_0\}$ is decomposed into infinitely many parts $\{\hat{L}_0 = F(i_0, i_1, \ldots, i_n) \mid i_1, \ldots, i_n \in \mathbb{Z}^+\}$.  After the proof of Proposition \ref{prop:determ}, a method of constructing such functions is described along with some examples.

\begin{prop}\label{prop:determ}
Let $\left(\left(X_i, L_i\right)_{i \in \mathbb{Z}}, \mathbb{P}\right)$ be a random Markov process on a finite alphabet $A$, let $n \in \mathbb{Z}^+$ be such that $|A| \leq 2^n$, and let $F: (\mathbb{Z}^+)^{n+1} \to \mathbb{Z}^+$ be an injective function with the property that for every $i_0, i_1, \ldots, i_n \in \mathbb{Z}^+$, $F(i_0, i_1, \ldots, i_n) \geq i_0$.  There is an independent process $\left(\hat{L}_i\right)_{i\in\ZZ}$ with
\[
\hat{\mathbb{P}}\left(\hat{L}_0 = F\left(i_0, i_1, \ldots, i_n\right)\right) = \mathbb{P}\left(L_0 = i_0\right) \frac{1}{2^{i_1}}\cdots \frac{1}{2^{i_n}}
\]
and a stationary coupling so that $\left(\left(X_i, \hat{L}_i\right)_{i \in \mathbb{Z}}, \hat{\mathbb{P}}\right)$ is a deterministic complete random Markov process.
\end{prop}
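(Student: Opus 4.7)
The plan is to encode elements of $A$ as binary strings of length $n$ and to use the $n$ hidden geometric random variables built into $\hat{L}_0$ to reconstruct $X_0$ deterministically, one bit at a time, via a binary-expansion simulation scheme. I would first choose an injection $A \hookrightarrow \{0,1\}^n$ and write $c(a) = (c_1(a), \ldots, c_n(a))$ for the label of $a \in A$, then introduce auxiliary random variables: an independent copy $(L'_i)_{i \in \mathbb{Z}}$ of the process $(L_i)$ together with independent geometric variables $\{G_i^{(k)} : i \in \mathbb{Z},\, k \in [n]\}$ with $\mathbb{P}(G = j) = 2^{-j}$, all mutually independent. Setting $\hat{L}_i := F\bigl(L'_i, G_i^{(1)}, \ldots, G_i^{(n)}\bigr)$ immediately yields the claimed marginal for $\hat{L}_0$, makes $(\hat{L}_i)_{i \in \mathbb{Z}}$ an independent stationary process on $\mathbb{Z}^+$, and (once the coupling is set up) will make $\hat{L}_0$ independent of the $X$-past.

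The core step is to specify, for each past $\omega = (\omega_j)_{j<0}$ and each tuple $(i_0, i_1, \ldots, i_n)$, a deterministic candidate $a(\omega, i_0, \ldots, i_n) \in A$ for $X_0$. Fixing $\omega$ and $i_0$, the random Markov property gives a conditional distribution on labels, $p(d) := \mathbb{P}\bigl(c(X_0) = d \mid (X_j)_{-i_0}^{-1} = (\omega_j)_{-i_0}^{-1},\, L_0 = i_0\bigr)$ for $d \in \{0,1\}^n$. Factor $p$ via the chain rule into conditional bit probabilities $q_k(d_1, \ldots, d_{k-1})$ giving the probability that the $k$-th label bit is $1$ given the previous bits, expand each $q_k$ in binary as $q_k = \sum_{j \geq 1} b_j^{(k)}(d_1, \ldots, d_{k-1})\, 2^{-j}$, and recursively set $d_k := b_{i_k}^{(k)}(d_1, \ldots, d_{k-1})$ for $k = 1, \ldots, n$; take $a(\omega, i_0, \ldots, i_n)$ to be the element of $A$ with label $(d_1, \ldots, d_n)$ (handling labels not in the image of $c$ by an arbitrary convention, which carries no mass). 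Declaring $X_0 := a\bigl(\omega, L'_0, G_0^{(1)}, \ldots, G_0^{(n)}\bigr)$ and iterating this shift-invariant recipe at every time index, an application of the Ionescu--Tulcea theorem produces a stationary measure $\hat{\mathbb{P}}$ on the product space.

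The remaining verifications are essentially mechanical. Since $\hat{L}_0 = F\bigl(L'_0, G_0^{(1)}, \ldots, G_0^{(n)}\bigr) \geq L'_0$ and $F$ is injective, knowing $\hat{L}_0$ and the $\hat{L}_0$-past uniquely recovers $(i_0, i_1, \ldots, i_n)$ and the $i_0$-past, hence $X_0$ deterministically, which yields the deterministic random Markov property. The main obstacle is to check that $\hat{\mathbb{P}}$ restricted to $(X_i)_{i \in \mathbb{Z}}$ agrees with the original $\mathbb{P}$; this reduces to the identity
\[
\sum_{(i_1, \ldots, i_n) \in (\mathbb{Z}^+)^n} \prod_{k=1}^n 2^{-i_k}\, \mathbf{1}\!\left[b_{i_k}^{(k)}(d^*_1, \ldots, d^*_{k-1}) = d^*_k\right] = p(d^*)
\]
for every $d^* \in \{0,1\}^n$. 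This I would prove by a telescoping argument: since $\sum_{j : b_j = 1} 2^{-j} = q_k$ and $\sum_{j : b_j = 0} 2^{-j} = 1 - q_k$, the $k$-th factor equals $\mathbb{P}(\text{bit } k = d^*_k \mid d^*_1, \ldots, d^*_{k-1})$, and the product over $k$ telescopes via the chain rule to $p(d^*)$. Once this identity is in hand, stationarity, the desired marginal of $\hat{L}_0$, and the independence of $\hat{L}_0$ from the $X$-past all follow directly from the construction.
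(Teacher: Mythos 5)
Your proposal is correct and follows essentially the same route as the paper's proof: a chain-rule factorization of the conditional law of $X_0$ into bit probabilities, binary expansion of each conditional bit probability, geometric weights $2^{-i_k}$ selecting which digit determines each bit, and the injective $F$ packaging $(i_0, i_1, \ldots, i_n)$ into the single look-back distance $\hat{L}_0 \geq i_0$. Your telescoping identity is exactly the paper's distributive expansion $\prod_{k=1}^n \bigl(\sum_{j \geq 1} \varepsilon_j^k 2^{-j}\bigr) = \sum_{i_1, \ldots, i_n} 2^{-(i_1 + \cdots + i_n)} \prod_k \varepsilon_{i_k}^k$, with the minor (and tidy) cosmetic difference that you define the output bit as the digit itself, $d_k := b_{i_k}^{(k)}(d_1, \ldots, d_{k-1})$, which automatically avoids any ambiguity in choosing expansions for complementary probabilities.
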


\begin{proof}
Assume, without loss of generality, that $A = \{0,1\}^n$ and write $X_i = \left(Y_i^1, Y_i^2, \ldots, Y_i^n\right)$.  Given an integer $i$, a fixed sequence of past states $\omega_{-1}, \ldots, \omega_{-i}$, and $d_1, \ldots, d_n \in \left\{0,1\right\}$, let $\left\{\varepsilon_k^j \mid k \geq 1,\ j \in \left[1,n\right]\right\}$ be such that for every $k, j$, $\varepsilon_k^j \in \left\{0,1\right\}$ and
\begin{align*}
\mathbb{P}&(Y_0^1 = d_1 \mid (X_j)_{-i}^{-1} = (\omega_j)_{-i}^{-1} \wedge L_0 = i) 
	= \sum_{k \geq 1} \frac{\varepsilon_k^1}{2^k},\\
\mathbb{P}&(Y_0^2 = d_2 \mid (X_j)_{-i}^{-1} = (\omega_j)_{-i}^{-1} \wedge Y_0^1 = d_1 \wedge L_0 = i) 
	= \sum_{k \geq 1} \frac{\varepsilon_k^2}{2^k},\\
	\vdots\\
\mathbb{P}&(Y_0^n = d_n \mid (X_j)_{-i}^{-1} = (\omega_j)_{-i}^{-1} \wedge Y_0^1 = d_1, Y_0^2 = d_2, \ldots,\\\
	& \qquad  Y_0^{n-1} = d_{n-1} \wedge L_0 = i) \\
	&= \sum_{k \geq 1} \frac{\varepsilon_k^n}{2^k}.
\end{align*}

Thus, 
\begin{multline}
\mathbb{P}(Y_0^1 = d_1, \ldots, Y_0^n = d_n \mid (X_j)_{-i}^{-1} = (\omega_j)_{-i}^{-1} \wedge L_0 = i)\\
	 = \prod_{i=1}^n \left(\sum_{k \geq 1} \frac{\varepsilon_k^i}{2^k} \right) = \sum_{i_1, i_2, \ldots, i_n} \frac{\prod_{j=1}^n \varepsilon_{i_j}^j}{2^{i_1+i_2 + \cdots+i_n}}.
\end{multline}

For any $i_1, i_2, \ldots, i_n \geq 1$, define
\[
\hat{\mathbb{P}}\left(\hat{L}_0 = F\left(i, i_1, i_2, \ldots, i_n\right)\right) = \mathbb{P}\left(L_0 = i\right) \frac{1}{2^{i_1}}\cdot \frac{1}{2^{i_2}}\cdots \frac{1}{2^{i_n}}
\]
and set
\begin{multline}
\hat{\mathbb{P}}\left((Y_0^i)_{i=1}^n =  (d_i)_{i=1}^n \mid \left(X_j\right)_{j=-i}^{-1} = \left(\omega_j\right)_{j=-i}^{-1} \wedge \hat{L}_0 = F\left(i, i_1, i_2, \ldots, i_n\right)\right)\\
	= \prod_{k=1}^n \varepsilon_{i_k}^k \in \left\{0,1\right\}.
\end{multline}

Since the function $F$ is injective, $F\left(i, i_1, i_2, \ldots, i_n\right) > i$, and 
\[
\sum_{i_1, \ldots, i_n} \hat{\mathbb{P}}\left(\hat{L}_0 = F\left(i, i_1, \ldots, i_n\right)\right) = \mathbb{P}\left(L_0 = i\right),
\]
 this defines a deterministic complete random Markov process. 
\end{proof}

The proof of Proposition \ref{prop:determ} depends on the existence of the functions $F:(\mathbb{Z}^+)^n \to \mathbb{Z}^+$ that are both injective and have the property that for every choice of $i_0, i_1, \ldots, i_n \in \mathbb{Z}^+$, $F(i_0, i_1, \ldots, i_n) \geq i_0$.  Here, we describe one possible method of constructing such functions.

Let $\{B_i \mid i \in \mathbb{Z}^+\}$ be a collection of disjoint sets of integers with the property that for every $i$, $\min B_i \geq i$.  For example if $\{q_1 < q_2 < \cdots\}$ is the set of primes, then choosing the sets $B_i = \{q_i^k \mid k \geq 1\}$ will have the desired property.  For any such collection $\{B_i\}_{i \in \mathbb{Z}^+}$, a sequence of functions $\{F_n\}_{n\geq 1}$ is defined so that for each $n \geq 1$, the function $F_n: (\mathbb{Z}^+)^{n+1} \to \mathbb{Z}^+$ is injective and has the property that for every $i_0, i_1, \ldots, i_n$, $F_n(i_0, i_1, \ldots, i_n) \geq i_0$.  

To begin the recursive construction, for $n =1$ and $i, j \in \mathbb{Z}^+$, define $F_1(i,j)$ to be the $j$-th smallest element of $B_i$.  The function $F_1$ is injective since the sets $\{B_i\}_{i \in \mathbb{Z}^+}$ are all disjoint and by construction, $F_1(i, j) \geq \min B_i \geq i$.

For $n \geq 2$, given $F_{n-1}$ with the desired properties, define $F_n : (\mathbb{Z}^+)^{n+1} \to \mathbb{Z}^+$ as follows.  For $i_0, i_1, \ldots, i_n \in \mathbb{Z}$, let $F_n(i_0, i_1, \ldots, i_n)$ be the $i_n$-th smallest element of $B_{F_{n-1}(i_0, i_1, \ldots, i_{n-1})}$.  The function $F_n$ is injective since $F_{n-1}$ is injective and the sets $\{B_i\}_{i \geq 1}$ are disjoint.  Further,
\[
F_n(i_0, i_1, \ldots, i_n) \geq \min B_{F_{n-1}(i_0, i_1, \ldots, i_{n-1})} \geq F_{n-1}(i_0, i_1, \ldots, i_{n-1}) \geq i_0.
\]
This completes the recursive construction.

Note that using a function $F$ defined in terms of powers of primes, as above, will lead to a deterministic random Markov process from the proof of Proposition \ref{prop:determ} with $\mathbb{E}(\hat{L}_0) = \infty$ even if $\mathbb{E}(L_0) < \infty$.  However, if $((X_i, L_i)_i, \mathbb{P})$ is a random Markov process on a finite alphabet with $\mathbb{E}(L_0) < \infty$, then by Theorem \ref{thm:countable}, it is also a deterministic random Markov process with a finite expected look-back distance.  It turns out that a suitably chosen function $F$ can be used with Proposition \ref{prop:determ} to give an alternate proof of this fact. 

After describing a version of the proof of Proposition \ref{prop:determ} for 2 letter alphabets, together with the construction of functions above, to Paul Balister, he gave the following construction for sets of integers to show that a complete random Markov process on a two-letter alphabet with finite expected look-back distance is also a deterministic random Markov processes with the same property.  When the proof was generalized to arbitrary finite alphabets, it was realized that these sets together with the method of constructing injective functions described above could be used to prove the corresponding result for any finite alphabet.

For each $i \geq 1$, set $B_i^0 = \left\{4i-1\right\}$ and for each $n \geq 1$, recursively define
\[
B_i^n = \left\{4t+1 \mid t \in B_i^{n-1}\right\} \cup \left\{4t+2 \mid t \in B_i^{n-1}\right\}.
\]  
Set $B_i = \cup_{n \geq 0} B_i^n$.  For each $i\geq 1$, $\min B_i = 4i-1 > i$.  Arguing by congruence module $4$, for each $i \neq j$, then $B_i \cap B_j = \emptyset$.  Further, it can be shown, by induction, that for each $n \geq 0$, $B_i^n \subseteq [4^{n+1}\left(i-1\right), 4^{n+1}i]$ and also $|B_i^n| = 2^n$.  Thus, defining a function $F_1: (\mathbb{Z}^+)^2 \to \mathbb{Z}^+$, as above, in terms of these sets $\{B_i\}_{i \geq 1}$ has the property that
\[
\sum_{j \geq 1} \frac{F_1\left(i,j\right)}{2^j} \leq \sum_{n \geq 0} \sum_{\ell \in B_i^n} \frac{\ell}{2^{2^n}} \sum_{n \geq 0} \frac{2^n 4^{n+1}i}{2^{2^n}} \leq 35 i.
\]

Using the recursive definition of the functions $F_n$, one can show that for every $n \geq 1$ and $i_0 \in \mathbb{Z}^+$,
\[
\sum_{i_1, i_2, \ldots, i_n \in \mathbb{Z}^+} \frac{F_n(i_0, i_1, \ldots, i_n)}{2^{i_1+i_2+\cdots+i_n}} \leq (35)^n i.
\]

Therefore, using the construction in Proposition \ref{prop:determ} this generalizes to arbitrary finite alphabets and the sets $\left\{F\left(i\right) \mid i \geq 1\right\}$ can be used to  show that if $|A| \leq 2^n$, then there is a function $F$ that can be used to construct $\hat{L}$, with the property that
\[
\mathbb{E}\left(\hat{L}_0\right) \leq 35^n \cdot \mathbb{E}\left(L_0\right).
\]

\subsection{Examples}

In this section, some examples are given to show the limits of possible extensions of Theorem \ref{thm:countable}.  One can verify that all of the examples given in this section on finite and countable alphabets have finite entropy.  This is of interest because it is often the case that results that hold for processes on finite alphabets also hold for processes with finite entropy on countable alphabets.

The following example shows that without the assumption of a finite dominating measure, not all random Markov processes are deterministic random Markov processes. 

\begin{example}\label{ex:not-determ}
Consider the Markov process, $\left(X_i, Y_i\right)_{i \in \mathbb{Z}}$ defined on a countable state space $\mathbb{Z}^+ \times \mathbb{Z}^+$ with the first coordinate given by one Markov process and, for $n \geq 1$, given $X_0 = n$, the second coordinate is chosen independently in the integers $[1,n]$. Precisely, the $g$-function is defined by
\begin{equation}\label{eq:markov-not-determ1}
\mathbb{P}\left(X_0 = a, Y_0 = 1 \mid X_{-1} = b\right)=
	\begin{cases}
		 1,	&\text{if $a=1$, $b=0$}\\
		 \frac{2}{3}		&\text{if $a = 1$, $b=2$}\\
		 \frac{2}{3}		&\text{if $a = 0$, $b=1$}
	\end{cases}
\end{equation}
and for all $n \geq 2$ and $k \in [1,n]$,
\begin{align}
\mathbb{P}\left(X_0 = n, Y_0 = k \mid X_{-1} = n-1\right) &=\frac{1}{3n}, \notag\\
\mathbb{P}\left(X_0 = n, Y_0 = k \mid X_{-1} = n+1\right) &=\frac{2}{3n}. \label{eq:markov-not-determ2}
\end{align}
The stationary distribution for this process, denoted $\{\pi(n,k) \mid n \geq 0,\ k \in [1, \max\{1,n\}]\}$, is given by
\begin{equation}\label{eq:not-determ-stat-dist}
 \pi(n,k) = 
 \begin{cases}
    \frac{1}{4}	&\text{if } (n,k) = (0,1)\\
    \frac{3}{n 2^{n+2}}	&\text{if } n \geq 1 \text{ and } k \in [1,n].\\
 \end{cases}
\end{equation}

One can construct a stationary process with a $g$-function given by \eqref{eq:markov-not-determ1} and \eqref{eq:markov-not-determ2} directly by defining measures on cylinder sets.

Suppose that $\left(X_i, Y_i\right)_{i \in \mathbb{Z}}$ were a deterministic random Markov process and let $\left(L_i\right)_{i \in \mathbb{Z}}$ be a process of look-back distances.  Let $k$ be a positive integer so that $\mathbb{P}\left(L_0 = k\right) = p > 0$ and let $n \geq \lceil 1/p \rceil +1$.  Let  $\left(a_i, b_i\right)_{i=-k}^{-1}$ be such that $a_{-1} = n$ and $\mathbb{P}\left(\left(X_i, Y_i\right)_{-k}^{-1} = \left(a_i, b_i\right)_{i=-k}^{-1}\right)>0$.  Then, given that the process is a deterministic random Markov process, there exist $a_0, b_0$ such that 
\[
\mathbb{P}\left(X_0 = a_0, Y_0 = b_0 \mid \left(X_i, Y_i\right)_{-k}^{-1} = \left(a_i, b_i\right)_{i=-k}^{-1} \wedge L_0 = k\right) = 1
\]
 and so
\begin{align*}
\mathbb{P}&\left(X_0 = a_0, Y_0 = b_0 \mid \left(X_i, Y_i\right)_{i=-k}^{-1} = \left(a_i, b_i\right)_{i=-k}^{-1}\right) \\
	&\geq \mathbb{P}\left(X_0 = a_0, Y_0 = b_0 \wedge L_0 = k \mid \left(X_i, Y_i\right)_{i=-k}^{-1} = \left(a_i, b_i\right)_{i=-k}^{-1}\right)\\
	&= p \cdot \mathbb{P}\left(X_0 = a_0, Y_0 = b_0 \mid \left(X_i, Y_i\right)_{i=-k}^{-1} = \left(a_i, b_i\right)_{i=-k}^{-1} \wedge L_0 = k\right).
\end{align*}
However, since $a_{-1} = n \geq \lceil 1/p \rceil + 1$, 
\[
\mathbb{P}\left(X_0 = a_0, Y_0 = b_0 \mid \left(X_i, Y_i\right)_{i=-k}^{-1} = \left(a_i, b_i\right)_{i=-k}^{-1}\right) \leq \frac{2}{3\left(n-1\right)} < p.
\]
Thus, $\left(\left(X_i, Y_i\right)_{i \in \mathbb{Z}}, \mathbb{P}\right)$ is not only a random Markov process, but a usual Markov process on a countable alphabet, and hence a random Markov process with a finite expected look-back distance, that is not a deterministic random Markov process.  On can easily check that $\left(\left(X_i, Y_i\right)_{i \in \mathbb{Z}}, \mathbb{P}\right)$ has no finite dominating measure.

\qed \end{example}

In Theorem \ref{thm:countable}, it was shown that a uniform martingale on a finite alphabet has a representation as a deterministic random Markov process with finite expected look-back distance if{f} $\sum \var_n (\mathbb{P}) < \infty$.  As shown in the proof, even if the alphabet is countable, any deterministic random Markov process with finite expected look-back distance satisfies $\sum \var_n (\mathbb{P}) < \infty$.  As the following example shows, the reverse implication is not, in general, true, even for a process with a finite dominating measure.

\begin{example}\label{ex:inf-look-back}
Let $\left\{Z_i\right\}_{i \geq 1}$ be a collection of disjoint sets so that for every $i \geq 1$, $|Z_i| = 4^i$.  

Let $A = \cup_{i=1}^{\infty} Z_i$ be the countable alphabet and define an independent process $\left\{\left(X_i\right)_{i \in \mathbb{Z}}\right\}$ with the following stationary measure.  For every $i \geq 1$ and $a \in Z_i$, set
\[
\mathbb{P}\left(X_0 = a \right)= \frac{1}{2^i|Z_i|}.
\]
As this is an independent process, the stationary measure on $X_0$ is a finite dominating measure and for every $n \geq 1$, $\var_n (\mathbb{P}) = 0$ and hence $\sum_n \var_n (\mathbb{P}) = 0 < \infty$.  By Theorem \ref{thm:countable}, this process has a representation as a deterministic random Markov process. Note that any independent process is a random Markov process with a finite expected look-back distance. 

This process cannot, however, be represented as a deterministic random Markov process with finite expected look-back distance. To see this, let $(L_i)_{i \in \mathbb{Z}}$ be any sequence of look-back distances with which $\left(X_i\right)_{i \in \mathbb{Z}}$ is a deterministic random Markov process.  Fix $\ell \geq 1$ and let $k \geq \ell$ be the smallest integer such that $\mathbb{P}\left(L_0 > k\right) < \frac{1}{100 \cdot 2^\ell}$.

Fix any $a_1, a_2, \ldots, a_k \in A$ and consider the probability that $X_0 \in Z_\ell$ conditioned on the event that for every $i \in [1,k]$, $X_{-i} = a_i$.  Then, as the process is independent,
\begin{align*}
\frac{1}{2^\ell} = \mathbb{P}(X_0 \in Z_{\ell}) &= \mathbb{P}\left(X_0 \in Z_\ell \mid X_{-1} = a_1, X_{-2} = a_2, \ldots, X_{-k} = a_k\right)\\
	&= \mathbb{P}\left(X_0 \in Z_{\ell}, L_0 \leq k \mid X_{-1} = a_1, X_{-2} = a_2, \ldots, X_{-k} = a_k\right) \\
	&	\qquad + \mathbb{P}\left(X_0 \in Z_{\ell}, L_0 > k \mid X_{-1} = a_1, X_{-2} = a_2, \ldots, X_{-k} = a_k\right)\\
	&\leq \mathbb{P}\left(X_0 \in Z_{\ell}, L_0 \leq k \mid X_{-1} = a_1, X_{-2} = a_2, \ldots, X_{-k} = a_k\right)\\
		&\qquad + \frac{1}{2^{\ell} 100}.
\end{align*}

Thus, 
\[
\mathbb{P}\left(X_0 \in Z_\ell, L_0 \leq k \mid X_{-1} = a_1, X_{-2} = a_2, \ldots, X_{-k} = a_k\right) \geq \frac{0.99}{2^\ell}.
\]
As this is a deterministic random Markov process, for each $i \leq k$, there is at most one element, $a \in Z_\ell$ for which the probability that $X_0 = a$ conditioned on the fixed past and the event $L_0 = i$ is positive.  Define the set 
\begin{multline*}
A_{\ell, k} = \big\{z \in Z_\ell \mid \exists\ i \leq k \text{ s.t. }\\ \mathbb{P}\left(X_0 = z \mid L_0 = i,  X_{-1} = a_1, X_{-2} = a_2, \ldots, X_{-i} = a_i\right) = 1\big\}.
\end{multline*}
Since $a_1, a_2, \ldots, a_k$ are fixed, then since $\mathbb{P}$ is a probability measure, $|A_{\ell, k}| \leq k$ and
\begin{align}
\frac{0.99}{2^{\ell}} &\leq \mathbb{P}\left(X_0 \in Z_\ell, L_0 \leq k \mid X_{-1} = a_1, X_{-2} = a_2, \ldots, X_{-k} = a_k\right) \notag\\
	&= \mathbb{P}\left(X_0 \in A_{\ell, k}, L_0 \leq k \mid X_{-1} = a_1, X_{-2} = a_2, \ldots, X_{-k} = a_k\right) \notag\\
	&\leq \mathbb{P}\left(X_0 \in A_{\ell, k} \mid X_{-1} = a_1, X_{-2} = a_2, \ldots, X_{-k} = a_k\right) \notag\\
	&= \mathbb{P}\left(X_0 \in A_{\ell, k}\right) \notag\\
	&=\frac{|A_{\ell,k}|}{2^\ell |Z_\ell|}
	\leq \frac{k}{2^{\ell}|Z_\ell|}. \label{eq:z-prob}
\end{align}

Thus, by the inequalities in \eqref{eq:z-prob}, $k \geq 0.99|Z_\ell|$ and by the choice of $k$, 
\[
\mathbb{P}\left(L_0 > k-1\right) = \mathbb{P}\left(L_0 \geq k\right) \geq \frac{1}{100\cdot 2^{\ell}}.
\]
In particular,
\[
\mathbb{E}\left(L_0\right) \geq k \mathbb{P}\left(L_0 \geq k\right) \geq \frac{0.99 \cdot |Z_\ell|}{100\cdot 2^\ell} = \frac{0.99 \cdot 4^\ell}{100\cdot 2^\ell} =\frac{99\cdot 2^\ell}{10^4}. 
\]
As $\ell$ was arbitrary, $\mathbb{E}\left(L_0\right) = \infty$.
\qed \end{example}

One of the key questions that Theorem \ref{thm:countable} seeks to answer is which uniform martingales on countably infinite alphabets are random Markov processes.  The following example shows that, in this respect, the results in Theorem \ref{thm:countable} are best possible.  Example \ref{sec:rwbins} below is a uniform martingale on a countable alphabet that does not have a dominating measure and is not a random Markov process.

Consider the following example, showing that the properties of uniform martingales are not strong enough in general to guarantee that a uniform martingale is a random Markov process, without the additional assumption of a finite dominating measure.  This key example shows that Theorem \ref{thm:countable} is, in a strong sense, best possible.
  
  \begin{example}\label{sec:rwbins}
  The stochastic process presented here is similar to Example \ref{ex:not-determ} and is constructed as a joint distribution on pairs, where the first coordinate forms a stationary random walk on $\mathbb{N}$.  The distribution of the second coordinate is given in terms of the past values of the first coordinate.

Let $\left(B_n\right)_{n \in \mathbb{Z}}$ be the biased random walk on $\mathbb{N}$ given, for $i \geq 2$ and any $n \in \mathbb{Z}$ by
\begin{align*}
\mathbb{P}&\left(B_n = i+1 \mid B_{n-1}=i\right)	=1/3,\\
\mathbb{P}&\left(B_n = i-1 \mid B_{n-1} = i\right)	=2/3, \text{ and}\\
\mathbb{P}&\left(B_n = 2 \mid B_{n-1}=1\right)	=1.
\end{align*}
Let $\left\{\pi\left(i\right)\right\}_{i \in \mathbb{N}}$ be the stationary distribution for the process $\left(B_n\right)_{n \in \mathbb{Z}}$.  Then, similarly to Example \ref{ex:not-determ},
\[
\pi\left(i\right) = 
	\begin{cases}
		\frac{1}{4}	&\text{ if } i=1,\\
		\frac{3}{2^{i+1}}	&\text{ if } i \geq 2.
	\end{cases}
\]
The process $\left(B_n\right)_{n \in \mathbb{Z}}$ is used to define another process $\left(Y_n\right)_{n \in \mathbb{Z}}$ such that, conditioned on the event $B_n =i$, then $Y_n \in [1, i+1]$.  On can think of the random variable $B_n$ indicating a `bin' and $Y_n$ the position chosen within the $B_n$-th bin.  The process given in this example is defined so that, for every $k \geq 1$, 
\begin{equation}\label{eq:bins_cond}
\mathbb{P}(Y_0 = i \mid B_0 = B_{-2k} = k, Y_{-2k} = j) = 
	\begin{cases}
		\frac{1}{k}	&\text{if $i \neq j$,}\\
		0			&\text{otherwise}.\\
	\end{cases}  
\end{equation}
In all other cases, the state for $Y_0$ is chosen independently at random. Note that the state of $B_0$ does not depend on the sequence $(Y_i)_{i< 0}$.  To be precise, the $g$-function in question is defined by the following conditional probabilities.  For $\{k_n\}_{n \leq 0}$ and $\{j_n\}_{n \leq 0}$ such that for every $n \leq 0$, $j_n \in \{1, 2, \ldots, k_n+1\}$, then
\begin{multline}\label{eq:bins-g-function}
 \mathbb{P}\left(B_0 = k_0, Y_0 = j_0 \mid (B_n)_{n < 0} = (k_n)_{n < 0}, (Y_n)_{n < 0} = (j_n)_{n < 0} \right)\\
 = \begin{cases}
    \frac{2}{3(k_0 + 1)}	&\text{if } k_{-1} \geq 2, k_{-1} = k_0 - 1, \text{ and, } k_{0} \neq k_{-2k_0}\\
    \frac{2}{3k_0}		&\text{if } k_{-1} \geq 2, k_{-1} = k_0 - 1, k_{0} = k_{-2k_0}, \text{ and, } j_{0} \neq j_{-2k_0}\\
    \frac{1}{3(k_0+1)}		&\text{if } k_{-1} \geq 2, k_{-1} = k_0 + 1, \text{ and, } k_{0} \neq k_{-2k_0}\\
    \frac{1}{3k_0}		&\text{if } k_{-1} \geq 2, k_{-1} = k_0 + 1, k_{0} = k_{-2k_0}, \text{ and, } j_{0} \neq j_{-2k_0}\\
    \frac{1}{3}			&\text{if } k_0 = 2, k_{-1} = 1, \text{ and, } k_{-4} \neq 2\\
    \frac{1}{2}			&\text{if } k_0 = 2, k_{-1} = 1, k_{-4} = 2, \text{ and, } j_0 \neq j_{-4}\\
    0				&\text{otherwise}.\\
    \end{cases}
\end{multline}

The measure is constructed by recursively defining measures on cylinder sets, using the recursive technique described in the introduction.  Let $\mathbb{P}_0$ be the measure for the stationary process given by the random walk $\left(B_n\right)_{n \in \mathbb{Z}}$.

For every $k \geq 1$, $\mu_k$ will be a measure on $(\mathbb{N} \times \mathbb{N})^k$ that satisfies the $g$-function \ref{eq:bins-g-function} and with the property that for every $k \geq 0$, if $\min \{a_0, a_1, \ldots, a_{2k}\} > k$ and $j_0, j_1, \ldots, j_{2k}$ are such that for each $\ell \in [0, 2k]$, $j_\ell \in [1, a_{\ell} + 1]$, then 
\begin{multline}\label{eq:bins-pos-measure}
\mu_{2k+1}\left(B_0 = a_0, Y_0 = j_0, B_{1} = a_1, Y_{1} = j_1, \ldots, B_{2k} = a_{2k}, Y_{2k} = j_{2k} \right)\\
 = \mathbb{P}_0\left(B_0 = a_0, B_{1} = a_1, \ldots, B_{2k} = a_{2k} \right) \prod_{i=0}^{2k} \frac{1}{a_i + 1}.
\end{multline}

In particular, if for each $\ell \in [1, 2k]$, $|a_{\ell} - a_{\ell-1}| = 1$, then in particular,
\[
\mu_{2k+1}\left(B_0 = a_0, Y_0 = j_0, B_{1} = a_1, Y_{1} = j_1, \ldots, B_{2k} = a_{2k}, Y_{2k} = j_{2k} \right)>0.
\]

Define the measure $\mu_1$ so that for any $i \geq 1$ and $j \in [1, i+1]$,
\[
\mu_1(B_1 = i, Y_1 = j) = \mathbb{P}_0(B_1 = i) \frac{1}{i+1}.
\]
Define the measure $\mu_2$ for $i_1, i_2 \geq 1$, $j_1 \in [1, i_1+1]$ and $j_2 \in [1, i_2+1]$ by
\[
\mu_2(B_1 = i_1, Y_1 = j_1, B_2 = i_2, Y_2 = j_2) = \mathbb{P}_0(B_1 = i_1, B_2 = i_2) \frac{1}{i_1+1}\cdot \frac{1}{i_2 + 1}.
\]
Note that the condition \eqref{eq:bins-pos-measure} is trivially satisfied for $\mu_1$.

For each $k \geq 1$, given $\mu_{2k}$ and $\mu_{2k-1}$, define $\mu_{2k+1}$ as follows.  For each $\ell \in [1, 2k+1]$, let $i_{\ell} \geq 1$ and $j_{\ell} \in [1, i_{\ell}+1]$.  Define the following events:
\begin{align*}
 E_1 &=\{B_1 = i_1, Y_1 = j_1\},\\
 E_2 &=\{B_2 = i_2, Y_2 = j_2, \ldots, B_{2k} = i_{2k}, Y_{2k} = j_{2k}\},\\
 E_3 &=\{B_{2k+1} = i_{2k+1}, Y_{2k+1} = j_{2k+1}\}
\end{align*}
Define the measure of the cylinder set $E_1 \cap E_2 \cap E_3$ as follows.  If either $i_1 \neq k$ or $i_{2k+1} \neq k$, then define
\[
\mu_{2k+1}\left(E_1 \cap E_2 \cap E_3\right) = \mu_{2k}(E_1 \mid E_2) \mu_{2k-1}(E_2) \mu_{2k}(E_3 \mid E_2).
\]
Note that if \eqref{eq:bins-pos-measure} is satisfied for $\mu_{2k-1}$, then it is also satisfied for $\mu_{2k+1}$.

If $i_1 = i_{2k+1} = k$ and $j_1 \neq j_{2k+1}$, then define
\begin{multline*}
\mu_{2k+1}\left(E_1 \cap E_2 \cap E_3\right)\\ = \mathbb{P}_0(B_1 = i_1 \mid B_2 = i_2) \mu_{2k-1}(E_2) \mathbb{P}_0(B_{2k+1} = i_{2k+1} \mid B_{2k} = i_{2k}) \frac{1}{k^2+k}.
\end{multline*}
Otherwise, if $i_1 = i_{2k+1} = k$ and $j_1 = j_{2k+1}$, then define $\mu_{2k+1}\left(E_1 \cap E_2 \cap E_3\right) = 0$.

The measure $\mu_{2k+2}$ is defined by a single case.  Define the events
\begin{align*}
E_1 &=\{B_1 = i_1, Y_1 = j_1\},\\
 E_2 &=\{B_2 = i_2, Y_2 = j_2, \ldots, B_{2k+1} = i_{2k+1}, Y_{2k+1} = j_{2k+1}\},\\
 E_3 &=\{B_{2k+2} = i_{2k+2}, Y_{2k+2} = j_{2k+2}\}
\end{align*}
and define the measure $\mu_{2k+2}$ on the cylinder set $E_1 \cap E_2 \cap E_3$ by
\[
\mu_{2k+2}(E_1 \cap E_2 \cap E_3) = \mu_{2k+1}(E_1 \mid E_2)\mu_{2k}(E_2) \mu_{2k+1}(E_3 \mid E_2).
\]

The measures $(\mu_k)_{k \geq 1}$ are a consistent sequence of measures that define a stationary process on $(\mathbb{N} \times \mathbb{N})^{\mathbb{Z}}$ with the property given by equation \eqref{eq:bins_cond}.  The process is a uniform martingale with $n$-th variation satisfying
\[
\var(n) \leq \frac{2}{n}.
\]

Denote the measure of this process $\mathbb{P}$ and suppose, in hopes of a contradiction, that $((\mathbb{N} \times \mathbb{N})^{\mathbb{Z}}, \mathbb{P})$ were a random Markov process with look-back distance $(L_i)_{i \in \mathbb{Z}}$.  Fix $k$ with the property that $\mathbb{P}(L_0 = k) > 0$.  

Let $n \geq 2k$ and consider the event 
\[
E_n = 
	\begin{cases}
		\{B_{-1} = n+1, B_{-2} = n+2, \ldots, B_{-k} = n+1\}	&\text{$k$ odd}\\
		 \{B_{-1} = n+1, B_{-2} = n+2, \ldots, B_{-k} = n+2\}	&\text{$k$ even}.
	\end{cases}
\]
Then, the event $F_n = E_n \cap \{Y_{-1} = \cdots = Y_{-k} = 1\}$ has positive measure by equation \eqref{eq:bins-pos-measure}.  Conditioned on the event $F_n$, either $B_0 = n$ or $B_0 = n+2$ with probability $1$.  For every $j \in [1,n+1]$, there is a past, contained in $F_n$ with $B_{-2n} = n$ and $Y_{-2n} = j$.  Thus,
\begin{multline*}
\mathbb{P}(B_{0} = n, Y_{0} = j, L_0 = k \mid F_n)\\ = \mathbb{P}(B_{0} = n, Y_{0} = j, L_0 = k \mid F_n, B_{-2n} = n, Y_{-2n} = j) = 0.
\end{multline*}
Similarly, for every $j \in [1,n+3]$,
\begin{multline*}
\mathbb{P}(B_{0} = n+2, Y_{0} = j, L_0 = k \mid F_n)\\ = \mathbb{P}(B_{0} = n, Y_{0} = j, L_0 = k \mid F_n, B_{-2n-2} = n+2, Y_{-2n-2} = j) = 0.
\end{multline*}
Then,
\begin{align*}
\mathbb{P}(L_0 = k \mid F_n)
	&=\sum_{j=1}^{n+1} \mathbb{P}(B_{0} = n, Y_{0} = j, L_0 = k \mid F_n) +\\
	& \qquad \sum_{\ell =1}^{n+3} \mathbb{P}(B_{0} = n+2, Y_{0} = j, L_0 = k \mid F_n)\\
	&=0,
\end{align*}
contradicting the assumption that $\mathbb{P}(L_0 = k) > 0$, since $\mathbb{P}(F_n) > 0$ and the events $F_n$ and $\{L_0 = k\}$ are independent.

The details are not given here, but one could prove that the measure constructed in this example is reversible.

Therefore, this process is not a random Markov process.
\qed \end{example}

Example \ref{sec:rwbins} shows that something stronger than the assumptions in the definition of a uniform martingale is needed to guarantee that a stationary process is a random Markov process. However,  random Markov processes do not obey the dominating measure condition (Definition \ref{def:dommeasure}) in general; while every random Markov chain on a finite state space has a finite dominating measure, this need not be the case when the alphabet is infinite. We illustrate this by the following much simpler example.

\begin{example}
\label{example:rmnodom}
Consider the following Markov chain on the state space $\mathbb{Z}^+$.  Define the transition probabilities, for each $n \geq 1$ by
\begin{align*}
	\mathbb{P}\left(X_0 = 1 \mid X_{-1} = n\right) &=\frac{1}{2}\\
	\mathbb{P}\left(X_0 = n+1 \mid X_{-1} = n\right)	&=\frac{1}{2}.
\end{align*}
Then, $\left(X_i\right)_{i \in \mathbb{Z}}$ has a stationary probability measure with $\mathbb{P}\left(X_0 = n\right) = \frac{1}{2^n}$.  As $\left(\left(X_i\right)_{i \in \mathbb{Z}}, \mathbb{P}\right)$ is a Markov chain with a stationary probability measure, this process is also a random Markov chain.  However, there can be no finite dominating measure.  
\qed \end{example}

\section{Uncountable alphabets}\label{sec:unctble}

Example \ref{sec:rwbins} in the previous section demonstrates that the notions of uniform martingale and random Markov process are not equivalent when the alphabet for the process is infinite.  In this section, another condition is considered which implies that a process is a random Markov process, even  for uncountable alphabets. 

Recall that a stationary process $\left(\left(X_n\right)_{n \in \mathbb{Z}}, \mathbb{P}\right)$, on alphabet $A$, satisfies  \emph{Berbee's ratio condition} (see Definition \ref{cond:four}) if{f} for every $\varepsilon >0$, there is an $n$ so that for every $\mathbf{\omega} \in A^{\mathbb{Z}^-}$ and $E_0$, a measurable subset of $A$,
\begin{equation}\label{eq:ratio}
\left|\frac{\mathbb{P}\left(X_0 \in E_0 \mid X_{-1} = \omega_{-1}, \ldots\right)}{\mathbb{P}\left(X_0 \in E_0 \mid X_{-1} = \omega_{-1}, \ldots, X_{-n} = \omega_{-n}\right)}  - 1 \right| < \varepsilon.
\end{equation} 

The above choice of ratio is chosen so that if the denominator is $0$ then the numerator is also.  Thus, adopting the convention that $\frac{0}{0} = 1$, the fraction in \eqref{eq:ratio} is well-defined for all $A$ and $\mathbf{\omega}$.

While any stationary process that satisfies the ratio condition is certainly a uniform martingale, the converse need not be true, in general. Indeed, we now give a series of examples illustrating the differences between these notions. 

First, the idea behind Example \ref{sec:rwbins} is modified to explicitly construct an example of a uniform martingale on an uncountable alphabet that has a dominating measure, is not a random Markov process and does not satisfy the ratio condition.  Note that by Theorem \ref{thm:ratio}, it must be the case that any such stationary process does not satisfy Berbee's ratio condition. 

\begin{example}\label{ex:unctble-notrm}
Let $A = \left(0,1\right]$ be the half-open unit interval and let $\lambda$ be the usual Lebesgue measure on $A$.  The example given here is a uniform martingale on $A$, with stationary measure $\lambda$ and finite dominating measure $2\cdot \lambda$ that is not a random Markov process. 

For every $x \in (0,1]$, there is a unique pair $(i, r)$ with $i \geq 1$ and $r \in (0,1]$ so that $x = \frac{1}{2^i}(1+r)$.  The alphabet $A$ is treated equivalently as $(0,1]$ and as $\mathbb{Z}^+ \times (0,1]$ and the values of $i$ and $r$ are used below to simplify the definition of the process and $g$-function.  On $\mathbb{Z}^+ \times (0,1]$, Lebesgue measure $\lambda$ can be given by choosing $i$ and $r$ independently, with $\mathbb{P}(i = i_0) = \frac{1}{2^{i_0}}$ and $r$ chosen according to $\lambda$.

For every $k \geq 1$ and $j \in \{1, 2, \ldots, k\}$, define
\begin{equation}\label{eq:small-ints}
I_{k,j} = \bigg(\frac{j-1}{k}, \frac{j}{k} \bigg]
\end{equation}
These sets will play the role of the `bins' as in Example \ref{sec:rwbins}.

The stationary process $((X_n, R_n)_{n \in \mathbb{Z}}, \mathbb{P})$ constructed in this section has conditional probabilities given as follows.  For every $(i_n, r_n)_{n < 0} \subseteq (\mathbb{Z}^+ \times (0,1])^{\mathbb{Z}^-}$, $i_0 \in \mathbb{Z}^+$,  and measurable set $E_0 \subseteq (0,1]$,
\begin{multline}\label{eq:unctble-g-fn}
\mathbb{P}\left(X_0 = i_0, R_0 \in E_0 \mid X_{-1} = i_{-1}, R_{-1} = r_{-1}, X_{-2} = i_{-2}, R_{-2} = r_{-2}, \ldots \right)\\
=
\begin{cases}
	\frac{1}{2^{i_0}} \frac{\lambda(E \cap I_{k, \ell}^c)}{1 - 1/k}	&\text{if $i_{-1} = \cdots = i_{-k+1} = 1$, $i_{-k} = k$, and $r_{-k} \in I_{k, \ell}$}\\
	\frac{1}{2^{i_0}} \lambda(E)	&\text{otherwise}.
\end{cases}
\end{multline}
That is, the values of $i_0$ are always $\ell$ with probability $2^{-\ell}$ and $r_0$ is chosen according to Lebesgue measure unless $i_{-1} = \cdots = i_{-k+1} = 1$ and $i_{-k} = k$, in which case, $r_0$ is chosen in a bin different from the bin containing $r_{-k}$.

To see that any stationary process that satisfies equation \eqref{eq:unctble-g-fn} is a uniform martingale (Definition \ref{def:um}), fix $(i_j, r_j)_{j < 0} \subseteq (\mathbb{Z}^+ \times (0,1])^{\mathbb{Z}^-}$, $i_0 \in \mathbb{Z}^+$, $n \geq 1$ and let $E_0 \subseteq (0,1]$ be a measurable set.  If for some $m \geq n+1$, $i_{-1} = \cdots = i_{-m+1} = 1$, then for some $\ell \in \{1, 2, \ldots, m\}$,
\begin{align*}
\vert \mathbb{P}&\left(X_0 = i_0, R_0 \in E_0 \mid X_{-1} = i_{-1}, R_{-1} = r_{-1}, \ldots \right)\\
& - \mathbb{P}\left(X_0 = i_0, R_0 \in E_0 \mid X_{-1} = i_{-1}, R_{-1} = r_{-1}, \ldots, X_{-n} = i_{-n}, R_{-n} = r_{-n} \right) \vert\\
	&\leq \frac{1}{2^{i_0}}\bigg\vert \frac{\lambda(E_0 \cap I_{m, \ell}^c)}{1 - 1/m} - \lambda(E_0)\bigg\vert\\
	& = \frac{1}{2^{i_0}}\bigg\vert \frac{\lambda(E_0 \cap I_{m, \ell}^c)}{m-1} - \lambda(E_0 \cap I_{m, \ell})\bigg\vert\\
	&\leq \frac{1}{2}\left(\frac{1 - 1/m}{m-1} + \frac{1}{m} \right)\\
	& = \frac{1}{m} \leq \frac{1}{n+1}.
\end{align*}
Otherwise,
\begin{multline*}
\mathbb{P}\left(X_0 = i_0, R_0 \in E_0 \mid X_{-1} = i_{-1}, R_{-1} = r_{-1}, \ldots \right) =\\ \mathbb{P}\left(X_0 = i_0, R_0 \in E_0 \mid X_{-1} = i_{-1}, R_{-1} = r_{-1}, \ldots, X_{-n} = i_{-n}, R_{-n} = r_{-n} \right)
\end{multline*}

Further, to see that any stationary process satisfying equation \eqref{eq:unctble-g-fn} has $2 \cdot \lambda$ as a finite dominating measure, note that for any $m \geq 1$, $\ell \in \{1, 2, \ldots, m\}$,
\[
\frac{\lambda(E \cap I_{m, \ell}^c)}{1 - 1/k} \leq 2 \lambda(E).
\]

Rather than appealing to fixed-point theorems to show that there is at least one stationary process satisfying equation \eqref{eq:unctble-g-fn}, such a measure can be explicitly constructed by recursion in order to guarantee that certain natural sets have positive measure.  Some details of this construction are given here and closely follow the construction method used in Example \ref{sec:rwbins}.

A sequence of consistent measures $(\mu_k)_{k \geq 1}$ is constructed so that for each $k \geq 1$, $\mu_k$ is a measure on $(\mathbb{Z}^+ \times (0,1])^k$ and with elements of $(\mathbb{Z}^+ \times (0,1])^{k}$ defined by $X_1, \ldots, X_k \in \mathbb{Z}^+$ and $R_1, \ldots, R_k \in (0,1]$.  The measures constructed will have the property that for every $\ell \leq k-1$, $i_1, i_2, \ldots, i_k$ does not contain a subsequence of length $\ell$ of the form $\ell, 1, 1, \ldots, 1$, then
\begin{multline}\label{eq:no-bad-event}
\mu_k(X_1 = i_1, R_1 \in E_1, X_2 = i_2, R_2 \in E_2, \ldots, X_k = i_k, R_k \in E_k)\\ = \frac{1}{2^{i_1 + i_2 + \cdots + i_k}} \prod_{j = 1}^{k} \lambda(E_j).
\end{multline}
That is, except for certain special choices of $i_1, i_2, \ldots, i_k$, the measure $\mu_k$ will couple the measure on each coordinate independently.

To start the recursion, let $\mu_1 \sim \lambda$ be the usual Lebesgue measure and let $\mu_2 \sim \lambda \times \lambda$ be the usual product measure with $\lambda$.  Both measures $\mu_1$ and $\mu_2$ satisfy condition \eqref{eq:no-bad-event} trivially.

For the recursion step, fix $k \geq 3$ and suppose that $\mu_{k-1}$ and $\mu_{k-2}$ have been defined and satisfy the condition in equation \eqref{eq:no-bad-event}.  Fix $i_1, i_2, \ldots, i_k \in \mathbb{Z}^+$ and let $E_1, E_2, \ldots E_k \subseteq (0,1]$ be measurable sets.  Without loss of generality, assume that for every $j \in [1,k]$, there exists $\ell \in [1,k-1]$ so that $E_j \subseteq I_{k-1, \ell}$.  The measure can subsequently be defined on arbitrary sets by taking finite disjoint unions.  Define the events
\begin{align*}
F_1 &= \left\{X_1= i_1, R_1 \in E_1 \right\}\\
F_{k-2} &=\bigg\{X_2 = i_2, R_2 \in E_2, X_3 = i_3, R_3 \in E_3, \ldots, X_{k-1} = i_{k-1}, R_{k-1} \in E_{k-1}\bigg\}\\
F_k &= \left\{ X_k = i_k, R_{k} \in E_k\right\}\\
\end{align*}

In order to define $\mu_k(F_1 \cap F_{k-2} \cap F_k)$, the measures $\mu_{k-2}, \mu_{k-1}$ are used to define an coupling of $\mu_{k-1}(F_1 \cap F_{k-2})$ and $\mu(F_{k-2} \cap F_k)$ in a way that satisfies equation \eqref{eq:no-bad-event} and condition \eqref{eq:unctble-g-fn}.

If $i_1 = k-1$ and $i_2 = \cdots = i_{k-1} = 1$, then let $\ell_1, \ell_2$ be such that $E_1 \subseteq I_{k-1, \ell_1}$ and $E_k \subseteq I_{k-1, \ell_2}$.  Define
\begin{equation}\label{eq:muk1}
\mu_k\left(F_1 \cap F_{k-2} \cap F_k \right)
	 = \mu_{k-2}(F_{k-2}) \cdot
	\begin{cases}
		\frac{1}{2^{i_k}} \frac{1}{2^{i_1}} \lambda(E_1) \lambda(E_k) \cdot \frac{k-1}{k-2}	&\text{if $\ell_1 \neq \ell_2$}\\
		0	&\text{if $\ell_1 = \ell_2$}.
	\end{cases}
\end{equation} 
Otherwise, define
\begin{equation}\label{eq:muk2}
\mu_k\left(F_1 \cap F_{k-2} \cap F_k \right)
	 = 
		\mu_{k-2}\left(F_{k-2}\right) \cdot \mu_{k-1}\left(F_{1} \mid F_{k-2} \right) \cdot \mu_{k-1}\left(F_{k} \mid F_{k-2} \right). 
\end{equation}

Using the Carath\'{e}odory extension theorem, the measure $\mu_k$ is extended from cylinder sets to all measurable sets.

One can show that the measure $\mu_k$ defined by equations \eqref{eq:muk1} and \eqref{eq:muk2} satisfy conditions \eqref{eq:unctble-g-fn} and \eqref{eq:no-bad-event} and in particular, for every $k, \ell$, 
\[
\mathbb{P}\left(X_{-k} = k, X_{-k+1} = 1, \cdots, X_{-1} = 1, X_0 = \ell \right) > 0.
\]

This completes the construction of the sequence of consistent measures $(\mu_k)_{k \geq 1}$ that are stationary by construction.  Thus, by the Kolmogorov extension theorem, there is a measure $\mathbb{P}$ on $(\mathbb{Z}^+ \times (0,1])^\mathbb{Z}$ with marginals given by the measures $(\mu_k)_{k\geq 1}$ that is stationary by construction and satisfies conditions \eqref{eq:unctble-g-fn} and \eqref{eq:no-bad-event}.

To see that this uniform martingale is not a random Markov process, suppose, in hopes of a contradiction that it were and let $k$ be such that $\mathbb{P}(L_0 = k) > 0$.  Then, for $r_{-1}, r_{-2}, \ldots, r_{-k+1} \in (0,1]$ and every $j \geq 1$ and $E_0 \in (0,1]$ be measurable,
\begin{align*}
\mathbb{P}&\bigg(X_0 = j, R_0 \in E_0 \wedge L_0 = k \mid X_{-1} = 1, R_{-1} = r_1, \ldots,\\
	&\qquad X_{-k+1} = 1, R_{-k+1} = r_{-k+1}  \bigg)\\
	&=\mathbb{P}\bigg(X_0 = j, R_0 \in E_0 \wedge L_0 = k \mid X_{-1} = 1, R_{-1} = r_1, \ldots,\\
	&\qquad X_{-k+1} = 1, R_{-k+1} = r_{-k+1}, X_{-k} = k, R_{-k} = j  \bigg)\\
	& = 0.
\end{align*}

As $j, r$ and $r_{-1}, \ldots, r_{-k+1}$ were arbitrary,
\[
\mathbb{P}(L_0 = k \mid X_{-1} = X_{-2} = \cdots = X_{-k+1} = 1) = 0,
\]
contradicting the assumption that these events are independent and occur with positive probability.  Therefore, the process is not a random Markov process.
\qed \end{example} 

While a stationary process that satisfies the ratio condition is a uniform martingale, it need not have a dominating measure, as the following examples show.

\begin{example}
Two examples are given of processes without dominating measures that satisfy the ratio condition.  For the first, consider the measure on $\mathbb{N}^{\mathbb{Z}}$, given by setting, for each $n \geq 1$,
\[
\mathbb{P}\left(\ldots, X_{-1}=n, X_0=n, X_{1}=n, X_2 = n, \ldots\right) = \frac{1}{2^n}.
\]
This certainly satisfies the ratio condition and has no finite dominating measure.
\qed \end{example}

The next example is a process that satisfies the ratio condition without having dominating measure that is also ergodic and mixing.  

\begin{example}
Consider again a measure on doubly infinite words on $\mathbb{N}$, with the measure on $X_0$ given in terms of the values of $X_{-1}$ and $X_{-2}$.  Define
\begin{equation}\label{eq:2-step-Markov}
\mathbb{P}\left(X_0=c \mid X_{-1} = a, X_{-2}=b\right)=
	\begin{cases}
		\frac{1}{2}	&\text{ if } a \neq b \text{ and } c=a,\\
		\frac{2^{-c-1}}{1-2^{-a}}		&\text{ if } a \neq b \text{ and } c \neq a,\\
		0			&\text{ if } a=b=c, \text{ and }\\
		\frac{2^{-c}}{1-2^{-b}}	&\text{ if } a=b \text{ and } c \neq b.
	\end{cases}
\end{equation}
At stationary process satisfying equation \eqref{eq:2-step-Markov} is a 2-step Markov process with stationary measure on pairs satisfying
\begin{equation}
\mathbb{P}\left(X_0 = a, X_1 = b \right) = 
	\begin{cases}
		\frac{3}{4} \cdot \frac{1}{2^{a+b}}	&\text{ if } a \neq b\\
		\frac{3}{4} \cdot \frac{1}{2^a}\left(1 - \frac{1}{2^a}\right)	&\text{ if } a = b.
	\end{cases}
\end{equation}
Thus, the stationary distribution on any particular coordinate is
\[
\mathbb{P}\left(X_0 = a\right) = \frac{3}{2}\frac{1}{2^a}\left(1-\frac{1}{2^a}\right).
\]
This process satisfies the ratio condition, Definition \ref{cond:four}, as it is a two-step Markov chain.  While $\left(X_i\right)_{i \in \mathbb{Z}}$ is a uniform martingale, this process does not have a finite dominating measure as, for example, a dominating measure would have to give measure at least $\frac{1}{2}$ to each integer.
\qed \end{example}

These examples show that, in terms of the conditions considered here, the implication in the following theorem is as strong as possible.  This proof is closely related to that given by Kalikow~\cite{Kalikow90} for $2$-letter alphabets.  Recall Theorem \ref{thm:ratio}:

\begin{thmrat}
Let $A$ be any set and $\Omega = A^{\mathbb{Z}}$.  Let $\mathbb{P}$ be a stationary probability measure on $\Omega$ so that $\left(\Omega, \mathbb{P}\right)$ satisfies the ratio condition as in Definition \ref{cond:four}, then $\left(\Omega, \mathbb{P}\right)$ is a random Markov process.
\end{thmrat}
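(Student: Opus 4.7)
The plan is to decompose the full-past conditional measure
\[
P_\infty(\omega, \cdot) := \PP\bigl(X_0 \in \cdot \mid (X_i)_{i<0} = (\omega_i)_{i<0}\bigr)
\]
as a countable convex mixture of probability kernels, each depending on $\omega$ through only finitely many past coordinates; Berbee's ratio condition is tailor-made to force the successive residual measures to be positive uniformly in $\omega$. First I would fix a summable sequence $\varepsilon_k \downarrow 0$ (say $\varepsilon_k = 2^{-k-1}$) and use the ratio condition to extract an increasing sequence of look-back distances $n_1 < n_2 < \cdots$ such that, writing $f_k(\omega, E_0) := \PP(X_0 \in E_0 \mid (X_i)_{i=-n_k}^{-1} = (\omega_i)_{i=-n_k}^{-1})$ and $f := P_\infty$, the multiplicative bound
\[
\frac{f(\omega, E_0)}{1+\varepsilon_k} \leq f_k(\omega, E_0) \leq \frac{f(\omega, E_0)}{1-\varepsilon_k}
\]
holds for every past $\omega$ and every measurable $E_0 \subseteq A$; note that $f_k(\omega, \cdot)$ depends on $\omega$ only through its $n_k$-past.

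Next I would introduce weights $\alpha_0 := 0$ and $\alpha_k := \prod_{j \geq k}(1-\varepsilon_j)/(1+\varepsilon_{j+1})$ for $k \geq 1$, which are well-defined and satisfy $\alpha_k \in (0,1)$ with $\alpha_k \nearrow 1$ because $\sum \varepsilon_k < \infty$; they also obey the recursion $\alpha_k(1-\varepsilon_{k-1}) = \alpha_{k-1}(1+\varepsilon_k)$ for $k \geq 2$. Combining this recursion with the displayed two-sided bound yields $\alpha_k f_k(\omega, E_0) \geq \alpha_{k-1} f_{k-1}(\omega, E_0)$ for every $\omega$ and every $E_0$, so the telescoping differences $g_k(\omega, \cdot) := \alpha_k f_k(\omega, \cdot) - \alpha_{k-1} f_{k-1}(\omega, \cdot)$ are positive measures on $A$ of total mass $p_k := \alpha_k - \alpha_{k-1} > 0$. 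Setting $h_k(\omega, \cdot) := p_k^{-1} g_k(\omega, \cdot)$ then yields a probability kernel depending on $\omega$ only through its $n_k$-past, and telescoping gives $\sum_{k=1}^K g_k(\omega, \cdot) = \alpha_K f_K(\omega, \cdot) \to f(\omega, \cdot)$ pointwise as $K \to \infty$, so $\sum_{k \geq 1} p_k = 1$ and $\sum_{k \geq 1} p_k\, h_k(\omega, \cdot) = P_\infty(\omega, \cdot)$.

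Finally I would realise this decomposition as a stationary coupling $\hat\PP$. Keep the $X$-process with its given law and couple in $(L_i)$ by prescribing that, conditionally on the entire trajectory $(X_j)_{j \in \ZZ}$, the variable $L_i$ takes value $n_k$ with probability $p_k\, \rho_k^{(i)}(X_i)$, where $\rho_k^{(i)} := dh_k((X_j)_{j<i}, \cdot)/df((X_j)_{j<i}, \cdot)$ is the Radon--Nikodym derivative (well-defined with essentially bounded density, and satisfying $\sum_k p_k \rho_k^{(i)} = 1$ almost surely because of the decomposition). Integrating over $X_i$ yields $\hat\PP(L_0 = n_k \mid (X_j)_{j<0}) = p_k$, whence $L_0$ is independent of the past of $X$, and a one-line Bayes computation then gives $\hat\PP(X_0 \in E_0 \mid (X_j)_{j<0}, L_0 = n_k) = h_k(\omega, E_0)$, which depends only on the $n_k$-past, verifying Definition \ref{def:randomMarkov}; stationarity of $\hat\PP$ is immediate from the translation-invariance of the prescription. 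The main obstacle the argument confronts is step two: ensuring uniform positivity of the signed measures $g_k$. This is exactly what multiplicative (ratio) closeness of $f$ to $f_k$ enables and what additive (total variation) closeness---all that the uniform martingale hypothesis provides---does not, as Example \ref{ex:unctble-notrm} demonstrates.
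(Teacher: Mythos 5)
Your proposal is correct and takes essentially the same route as the paper: your telescoping differences $g_k = \alpha_k f_k - \alpha_{k-1} f_{k-1}$ are precisely the paper's table measures $p_k\,\tau_{\mathbf{\omega},k}$ from equation \eqref{eq:uncountable_table} with $\alpha_k = \sum_{j \le k} p_j$, and in both arguments positivity of the residuals is forced by the two-sided multiplicative bound of the ratio condition applied through the full past. The remaining differences are cosmetic---you fix the accuracies $\varepsilon_k$ first and derive the weights as infinite products, whereas the paper fixes the weights $(p_i)$ first and chooses the $n_i$ adaptively, and you spell out the coupling of $(L_i)$ via Radon--Nikodym derivatives where the paper defines $\hat{\mathbb{P}}$ from the table values more tersely.
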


\begin{proof}
Let $\left(p_i\right)_{i\geq 1} \subseteq \left(0,1\right)$ with $\sum_{i=1}^{\infty} p_i = 1$.  It is shown in this proof that there is a sequence $\left\{n_i\right\}_{i \geq 1}$ and a process $\left(L_n\right)_{n \in \mathbb{Z}}$ with a coupling $\hat{\mathbb{P}}$ such that $\left(\left(X_n, L_n\right)_{n \in \mathbb{Z}}, \hat{\mathbb{P}}\right)$ is a random Markov process and for each $i \geq 1$, $\hat{\mathbb{P}}\left(L_0 = n_i\right) = p_i$.

The sequence $n_i$ is chosen recursively.  Using the ratio condition, as in Definition \ref{cond:four}, choose $n_1$ large enough so that for all $n \geq n_1$, and for all $\mathbf{\omega}$, $E$,
\[
\left|\frac{\mathbb{P}\left(X_0 \in E \mid X_{-1} = \omega_{-1}, \ldots\right)}{\mathbb{P}\left(X_0 \in E \mid X_{-1} = \omega_{-1}, \ldots, X_{-n} = \omega_{-n}\right)}  - 1\right| < \frac{p_1}{2}.
\]

For all $i \geq 1$, given $n_i$, choose $n_{i+1} \geq n_i$ to be such that for all $n \geq n_{i+1}$, for all $\mathbf{\omega}$, $E$
\[
\left|\frac{\mathbb{P}\left(X_0 \in E \mid X_{-1} = \omega_{-1}, \ldots\right)}{\mathbb{P}\left(X_0 \in E \mid X_{-1} = \omega_{-1}, \ldots, X_{-n} = \omega_{-n}\right)}  - 1\right| < \frac{p_{i+1}}{2\sum_{j \leq i+1} p_j}.
\]

For every $i \geq 1$, $\mathbf{\omega}$, define a measure, $\mu_{\mathbf{\omega}, i}$ on $A$ by
\[
\mu_{\mathbf{\omega}, i}\left(E\right) = \mathbb{P}\left(X_0 \in E \mid X_{-1} = \omega_{-1}, \ldots, X_{-n_i}= \omega_{-n_i}\right).
\]
As in the proof of Theorem \ref{thm:countable}, the goal is to show that there is a joint distribution with a process $\left(L_n\right)_{n \in \mathbb{Z}}$ so that 
\[
\mu_{\mathbf{\omega}, i}\left(E\right) = \hat{\mathbb{P}}\left(X_0 \in E \mid X_{-1} = \omega_{-1}, \ldots, X_{-n} = \omega_{-n} \wedge L_0 \leq n_i\right).
\]
  For each $i \geq 1$, define another measure $\tau_{\mathbf{\omega}, i}$ as follows.  For $i=1$, set $\tau_{\mathbf{\omega}, 1} = \mu_{\mathbf{\omega}, 1}$ and for $i \geq 1$ define 
\begin{equation}\label{eq:uncountable_table}
\tau_{\mathbf{\omega}, i+1}\left(E\right) = \frac{1}{p_{i+1}}\left(\left(\sum_{j \leq i+1} p_j\right)\mu_{\mathbf{\omega}, i+1}\left(E\right) - \left(\sum_{j \leq i} p_j\right)\mu_{\mathbf{\omega}, i}\left(E\right) \right).
\end{equation}

Since $\mu_{\mathbf{\omega}, i}$ and $\mu_{\mathbf{\omega}, i+1}$ are both probability measures on $A$, $\tau_{\mathbf{\omega}, i+1}$ is a signed measure on $A$ with $\tau_{\mathbf{\omega}, i}\left(A\right) = 1$.  Note also, that by definition, for every measurable set $E$ and every $i$, the function $\mathbf{\omega} \mapsto \mu_{\mathbf{\omega}, i}(E)$ is a $\mathbb{P}$-measurable function of $\mathbf{\omega}$.  Thus, the function $\mathbf{\omega} \mapsto \tau_{\mathbf{\omega}, i}(E)$ is also $\mathbb{P}$-measurable. To show that $\tau_{\mathbf{\omega}, i+1}$ is also a positive measure, note that for every event $E$,
\begin{align*}
\frac{\mu_{\mathbf{\omega}, i+1}\left(E\right)}{\mu_{\mathbf{\omega}, i}\left(E\right)}
	&=\frac{\mathbb{P}\left(X_0 \in E \mid X_{-1} = \omega_{-1}, \ldots, X_{-n_{i+1}} = \omega_{-n_{i+1}}\right)}{\mathbb{P}\left(X_0 \in E \mid X_{-1} = \omega_{-1}, \ldots, X_{-n_i} = \omega_{-n_i}\right)}\\
	&=\frac{\mathbb{P}\left(X_0 \in E \mid X_{-1} = \omega_{-1}, \ldots, X_{-n_{i+1}} = \omega_{-n_{i+1}}\right)}{\mathbb{P}\left(X_0 \in E \mid X_{-1} = \omega_{-1}, \ldots\right)} \\
	&\qquad \cdot \frac{\mathbb{P}\left(X_0 \in E \mid X_{-1} = \omega_{-1}, \ldots\right)}{\mathbb{P}\left(X_0 \in E \mid X_{-1} = \omega_{-1}, \ldots, X_{-n_i} = \omega_{-n_i}\right)}\\
	&\geq \left(1-\frac{p_{i+1}}{2\sum_{j \leq i+1} p_j}\right)\left(1+\frac{p_{i+1}}{2\sum_{j \leq i+1}p_j}\right)^{-1}\\
	&\geq 1- \frac{p_{i+1}}{\sum_{j \leq i+1}p_j} = \frac{\sum_{j \leq i} p_j}{\sum_{j \leq i+1} p_j}.
\end{align*}

Thus, $\left(\sum_{j \leq i+1} p_j\right)\mu_{\mathbf{\omega}, i+1}\left(E\right) \geq \left(\sum_{j \leq i} p_j\right)\mu_{\mathbf{\omega}, i}\left(E\right)$ which implies that for every event $E$, $\tau_{\mathbf{\omega}, i+1}\left(E\right) \geq 0$.

The measure $\hat{\mathbb{P}}$ is defined so that for each $i$, $\mathbf{\omega}$ and $E$.
\begin{equation}\label{eq:unctbl_measure}
\hat{\mathbb{P}}\left(X_0 \in E \mid X_{-1} = \omega_{-1}, \ldots, X_{-n_i} = \omega_{-n_i} \wedge L_0 = n_i\right) = \tau_{\mathbf{\omega}, i}\left(E\right).
\end{equation}
Equation \eqref{eq:unctbl_measure} can be used together with the original measure $\mathbb{P}$ and the fact that $\tau_{\mathbf{\omega}, i}(E)$ is a $\mathbb{P}$-measurable function of $\mathbf{\omega}$, to define a probability measure, $\hat{\mathbb{P}}$ on the doubly infinite sequences $\left(X_n, L_n\right)_{n \in \mathbb{Z}}$.

\end{proof}

\section{Conclusion}\label{sec:open}

Many new problems have been raised about extending results about random Markov process on finite alphabets to processes on infinite alphabets.  The question of whether or not a uniform martingale has a representation as a random Markov process with finite expected look-back distance is of interest in light of the result of Kalikow~\cite{Kalikow90} regarding sufficient conditions for a process to be weak-Bernoulli.  Kalikow~\cite{Kalikow90} showed that a finite state random Markov process with finite expected look-back distance and satisfying some other conditions such as being weak-mixing or having some state whose probability given any past is bounded away from $0$, is weak-Bernoulli.  While this result~\cite[Theorem 7]{Kalikow90} was stated only for processes on finite alphabets, the proof remains valid for countable alphabets.   

In general, it remains unknown whether every uniform martingale on a countable alphabet with a finite dominating measure and $\sum_n \var_n\left(\mathbb{P}\right) < \infty$ has a representation as a random Markov process with finite expected look-back distance.  If this is not true, there are some special cases raised by previous work on random Markov processes that remain of interest.

One area in which open questions remain are the connections between uniform martingales, random Markov processes and extension of processes, as in the sense used in isomorphism ergodic theory.  Recall that the \emph{shift map} $T$ on doubly infinite words is the function $(\omega_i)_{i \in \mathbb{Z}} \mapsto (\omega_{i+1})_{i \in \mathbb{Z}}$.  A stationary process $(X_i)_{i \in \mathbb{Z}}$ on an alphabet $A$ is said to \emph{extend to} a stationary process $(Y_i)_{i \in \mathbb{Z}}$ on an alphabet $A'$ if{f} there is a function $f: A^{\mathbb{Z}} \to A'^{\mathbb{Z}}$ that is measurable, measure preserving and commutes with the shift operator.


Kalikow, Katznelson, and Weiss~\cite{KKW92} showed that every zero-entropy process can be extended to a uniform martingale on a finite alphabet.  One could ask whether every zero-entropy process be extended to a uniform martingale on a countable alphabet with $\sum_n \var_n < \infty$?

In~\cite{Kalikow12}, Kalikow proved that every process can be extended to a uniform martingale, and in fact to a random Markov process. In this paper we have established that it is of particular interest when such a process has a finite dominating measure for the present given the past. A further direction would be to establish necessary and sufficient conditions for a process to be extendable to a random Markov process with a finite dominating measure.  In particular, since a process has finite entropy if{f} it can be displayed as process with a finite alphabet, one interesting question is whether every process with finite entropy can be extended to a random Markov process with a finite alphabet.  This question was previously posed by Kalikow in~\cite{Kalikow90}.

In this paper and in a previous paper by Kalikow~\cite{Kalikow12}, processes with a countable alphabet have been studied in terms of the  categories in which such processes can be displayed. Here, we have also displayed finite processes as deterministic random Markov processes and looked at categories of processes with an uncountable alphabet. Isomorphism ergodic theory was launched by Don Ornstein with his isomorphism theorem and since then a great many theorems about isomorphism classes have been proved and extended to processes with a countable and uncountable alphabets. Now that we have displayed these additional ways of looking at such processes, it is our hope that future work will extend isomorphism ergodic theory to study these classifications also.

\section*{Acknowledgements}

The authors wish to thank Paul Balister for sharing his construction for the sets of integers in the special case of two-letter alphabets presented in Section \ref{subsec:determ_construction}.  It was his construction and our generalization of it to arbitrary finite alphabets that set us in the direction of considering deterministic random-step Markov processes with finite expected look-back distance.

\bibliographystyle{amsplain}
\bibliography{ErgodicBib}

\providecommand{\bysame}{\leavevmode\hbox to3em{\hrulefill}\thinspace}
\providecommand{\MR}{\relax\ifhmode\unskip\space\fi MR }
\providecommand{\MRhref}[2]{%
  \href{http://www.ams.org/mathscinet-getitem?mr=#1}{#2}
}
\providecommand{\href}[2]{#2}
\begin{thebibliography}{10}

\bibitem{AK92}
Kenneth~S. Alexander and Steven~A. Kalikow, \emph{Random stationary processes},
  Ann. Probab. \textbf{20} (1992), no.~3, 1174--1198. \MR{1175256 (94f:60054)}

\bibitem{Berbee87}
Henry Berbee, \emph{Chains with infinite connections: uniqueness and {M}arkov
  representation}, Probab. Theory Related Fields \textbf{76} (1987), no.~2,
  243--253. \MR{906777 (89c:60052)}

\bibitem{BHS08}
Noam Berger, Christopher Hoffman, and Vladas Sidoravicius, \emph{Nonuniqueness
  for specifications in $\ell^{2+\varepsilon}$}, ArXiv e-prints (2008).

\bibitem{BM93}
Maury Bramson and Steven Kalikow, \emph{Nonuniqueness in {$g$}-functions},
  Israel J. Math. \textbf{84} (1993), no.~1-2, 153--160. \MR{1244665
  (94h:28011)}

\bibitem{DF37}
Wolfgang Doeblin and Robert Fortet, \emph{Sur des cha\^{\i}nes \`a liaisons
  compl\`etes}, Bull. Soc. Math. France \textbf{65} (1937), 132--148.
  \MR{1505076}

\bibitem{Hulse06}
Paul Hulse, \emph{An example of non-unique {$g$}-measures}, Ergodic Theory
  Dynam. Systems \textbf{26} (2006), no.~2, 439--445. \MR{2218769
  (2007i:28022)}

\bibitem{JOP07}
Anders Johansson, Anders {\"O}berg, and Mark Pollicott, \emph{Countable state
  shifts and uniqueness of {$g$}-measures}, Amer. J. Math. \textbf{129} (2007),
  no.~6, 1501--1511. \MR{2369887 (2009f:37002)}

\bibitem{Kalikow90}
Steven Kalikow, \emph{Random {M}arkov processes and uniform martingales},
  Israel J. Math. \textbf{71} (1990), no.~1, 33--54.

\bibitem{Kalikow12}
\bysame, \emph{Infinite partitions and rokhlin towers}, Ergodic Theory and
  Dynamical Systems \textbf{32} (2012), no.~2, 707--738.

\bibitem{KKW92}
Steven Kalikow, Yitzhak Katznelson, and Benjamin Weiss, \emph{Finitarily
  deterministic generators for zero entropy systems}, Israel J. Math.
  \textbf{79} (1992), no.~1, 33--45. \MR{1195252 (94h:28015)}

\bibitem{KMcC10}
Steven Kalikow and Randall McCutcheon, \emph{An outline of ergodic theory},
  Cambridge Studies in Advanced Mathematics, vol. 122, Cambridge University
  Press, Cambridge, 2010. \MR{2650005 (2011i:37006)}

\bibitem{Keane72}
Michael Keane, \emph{Strongly mixing {$g$}-measures}, Invent. Math. \textbf{16}
  (1972), 309--324. \MR{0310193 (46 \#9295)}

\bibitem{Maillard07}
G.~Maillard, \emph{Introduction to chains with complete connections}, Lecture
  notes, 2007.

\bibitem{MU01}
R.~Daniel Mauldin and Mariusz Urba{\'n}ski, \emph{Gibbs states on the symbolic
  space over an infinite alphabet}, Israel J. Math. \textbf{125} (2001),
  93--130. \MR{1853808 (2002k:37048)}

\bibitem{OM35}
O.~Onicescu and G.~Mihoc, \emph{Sur les cha\^{i}nes statistiques}, C. R. Acad.
  Sci. Paris \textbf{200} (1935), 511--512.

\bibitem{Parry66}
William Parry, \emph{Generators and strong generators in ergodic theory}, Bull.
  Amer. Math. Soc. \textbf{72} (1966), 294--296. \MR{0193208 (33 \#1429)}

\bibitem{Parry69}
\bysame, \emph{Entropy and generators in ergodic theory}, W. A. Benjamin, Inc.,
  New York-Amsterdam, 1969. \MR{0262464 (41 \#7071)}

\bibitem{Rahe93}
M.~Rahe, \emph{On a class of generalized baker's transformations}, Canad. J.
  Math. \textbf{45} (1993), no.~3, 638--649. \MR{1222522 (94d:28020)}

\bibitem{Rahe94}
\bysame, \emph{On finite coding factors of a class of random {M}arkov chains},
  Canad. Math. Bull. \textbf{37} (1994), no.~3, 399--407. \MR{1289777
  (95j:28016)}

\bibitem{Rohlin65}
V.~A. Rohlin, \emph{Generators in ergodic theory. {II}}, Vestnik Leningrad.
  Univ. \textbf{20} (1965), no.~13, 68--72. \MR{0193207 (33 \#1428)}

\bibitem{Sarig99}
Omri~M. Sarig, \emph{Thermodynamic formalism for countable {M}arkov shifts},
  Ergodic Theory Dynam. Systems \textbf{19} (1999), no.~6, 1565--1593.
  \MR{1738951 (2000m:37009)}

\end{thebibliography}

\end{document}